\newcommand{\C}{\mathcal{C}}
\newcommand{\A}{\mathcal{A}}
\newcommand{\Z}{\mathbb{Z}}
\newcommand{\dd}{\; \mathrm{d}}
\newcommand{\R}{\mathbb{R}}
\newcommand{\ub}{\bar{u}}
\renewcommand{\sb}{\bar{s}}
\newcommand{\cb}{\bar{c}}
\newtheorem{Theorem}{Theorem}[section]
\newtheorem{Proposition}[Theorem]{Proposition}
\newtheorem{Lemma}[Theorem]{Lemma}
\theoremstyle{remark}
\newtheorem{Remark}[Theorem]{Remark}
\theoremstyle{definition}
\title{Traveling surface waves of moderate amplitude\\ in shallow water}
\author{Armengol Gasull, Anna Geyer}
\date{}
\begin{document}
 \maketitle
 
 %%%%%%%%%%%%%%%%%%%%%%%%%%%%%%%%%%%%%%%%%%%%%%%%%%%%%%%%%%%%%%%%%%
 %%%%%%%%%%%%%%%%%%%%%%%%%%%%%%%%%%%%%%%%%%%%%%%%%%%%%%%%%%%%%%%%%%

\begin{abstract}
\noindent We study traveling wave solutions of an equation for surface waves of moderate amplitude arising as a shallow water approximation of the Euler equations for inviscid, incompressible and homogenous fluids. We obtain solitary waves of elevation and depression, including a family of solitary waves with compact support, where the amplitude may increase or decrease with respect to the wave speed. Our approach is based on techniques from dynamical systems and relies on a reformulation of the evolution equation as an autonomous Hamiltonian system which facilitates an explicit expression for bounded orbits in the phase plane to establish existence of the corresponding periodic and solitary traveling wave solutions.

\end{abstract}
%%%%%%%%%%%%%%%%%%%%%%%%%%%%%%%%%%%%%%%%%%%%%%%%%%%%%%%%%%%%%%%%%%
%%%%%%%%%%%%%%%%%%%%%%%%%%%%%%%%%%%%%%%%%%%%%%%%%%%%%%%%%%%%%%%%%%

\section{Introduction and main result}
A number of competing nonlinear model equations for water waves have been proposed to this day to account for fascinating phenomena, such as wave breaking or solitary waves, which are not captured by linear theory. The well-known Camassa--Holm equation \cite{Camassa1993} is one of the most prominent examples, due to its rich structural properties. It is an integrable infinite-dimensional Hamiltonian system \cite{ Teschl2009, Con01, Constantin2006c} whose solitary waves are solitons \cite{Constantin2002, DikaMoli2007}. Some of its classical solutions develop singularities in finite time in the form of wave breaking \cite{Constantin1998a}, and recover in the sense of global weak solutions after blow up \cite{Bressan2007a,Bressan2007}. For a classification of its weak traveling wave solutions we refer to \cite{Lenells2005a}. The manifold of its enticing features led Johnson to demonstrate the relevance of the Camassa--Holm equation as a model for the propagation of shallow water waves of moderate amplitude. He proved that the horizontal component of the fluid velocity field at a certain depth 
within the fluid is indeed described by a Camassa--Holm equation \cite{Con11,Joh02}. Constantin and Lannes \cite{ConLan09} followed up on the matter in search of a suitable corresponding equation for the free surface and derived an evolution equation for surface waves of moderate amplitude in the shallow water regime,
\begin{align}
  \label{MAE}
  u_t + u_x + 6u u_x - 6u^2u_x + 12u^3u_x+
      u_{xxx} -u_{xxt}  + 14uu_{xxx} + 28u_xu_{xx} = 0,
\end{align}
The authors show that equation \eqref{MAE}  approximates the governing equations to the same order as the Ca\-mas\-sa--Holm equation, and also prove that the Cauchy problem on the line associated to \eqref{MAE}, is locally well-posed \cite{ConLan09}. Employing a semigroup approach due to Kato \cite{Kato1975}, Duruk \cite{DurukMutlubas2013a} shows that this results also holds true for a larger class of initial data, as well as for the corresponding spatially periodic Cauchy problem \cite{DurukMutlubas2013b}. Consequently, solutions of \eqref{MAE} depend continuously on their initial data in $H^s$ for $s>3/2$, and it can be shown that this  dependence is not uniformly continuous \cite{DGM13}. In the context of Besov spaces,  well-posedness is discussed \cite{Mi2013} using Littlewood-Paley decomposition, along with a study about analytic solutions and persistence properties of strong solutions. One of the important aspects of equation \eqref{MAE} lies in its relevance for capturing the non-linear phenomenon of wave breaking \cite{ConLan09,DurukMutlubas2013a}, a feature it shares with the Camassa-Holm equation. While the latter equation is known to possess global solutions \cite{Bressan2007,Constantin1998}, it is not apparent how to obtain global control of the solutions of equation \eqref{MAE}, owing to its involved structure and due to the higher order nonlinearities. However, passing to a moving frame one can study so-called traveling wave solutions, whose wave profiles move at constant speed in one direction without altering their shape. Existence of solitary traveling waves which decay to zero at infinity has been established \cite{Gey12c} for wave speeds $c>1$, and their orbital stability has been deduced \cite{DurukGeyer2013a} employing an approach due to Grillakis, Shatah and Strauss \cite{Grillakis1987} taking advantage of the Hamiltonian structure of \eqref{MAE}.\\

\noindent In the present paper we set out to improve the existence result \cite{Gey12c}  by loosening the assumption that solitary waves tend to zero at infinity. Allowing for a decay to an arbitrary constant, we establish existence of a variety of novel traveling wave solutions of \eqref{MAE}. 
\begin{Theorem}
  \label{thm}
For every speed $c\in \R\backslash\{c^*\}$ there exist peaked periodic, as well as  smooth solitary and periodic traveling wave solutions of \eqref{MAE}. Periodic waves may be obtained also for $c^*\in\R$, whose value is given in \ref{A:1}. \\

\noindent Moreover, the solitary waves can be characterized in terms of two parameters -- the wave speed $c$ and the level of the undisturbed water surface $s$ -- allowing us to determine the exact regions in this parameter space which give rise to the following types of waves, cf.~Figure \ref{Fig:amp}:
 \begin{itemize}
  \item Solitary waves with \emph{compact support} (along the straight line given by $ c+1+14\,s=0$).
  \item Solitary waves of \emph{elevation} whose amplitude is strictly \emph{increasing} (in region I) or \emph{decreasing} (in regions II and III) with respect to $c$.
  \item Solitary waves of \emph{depression} whose amplitude is strictly \emph{increasing} (in region V) or \emph{decreasing} (in region IV) with respect to $c$.
  \end{itemize}
 All solitary waves are symmetric with respect to their unique crest/trough, they are monotonic and decay exponentially to the undisturbed water surface $s$ at infinity.
  \end{Theorem}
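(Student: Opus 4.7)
The plan is to reduce the PDE \eqref{MAE} to a planar dynamical system via the traveling wave ansatz $u(x,t) = \varphi(\xi)$ with $\xi = x - ct$, and then read off the solitary and periodic waves from the phase portrait. After substitution, each term on the left-hand side is a derivative in $\xi$ (using $\varphi\varphi_x = \tfrac12(\varphi^2)'$, etc., and $14\varphi\varphi''' + 28\varphi'\varphi'' = (14\varphi\varphi'' + 7(\varphi')^2)'$), so one integration produces the second-order ODE
\begin{equation*}
(1+c+14\varphi)\varphi'' + 7(\varphi')^2 + (1-c)\varphi + 3\varphi^2 - 2\varphi^3 + 3\varphi^4 = A.
\end{equation*}
The key structural observation is that a second integration is possible without any integrating factor, because $\tfrac{d}{d\varphi}(1+c+14\varphi) = 14 = 2\cdot 7$ is exactly twice the coefficient of $(\varphi')^2$. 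Multiplying by $\varphi'$ and integrating once more yields the conserved quantity
\begin{equation*}
\mathcal H(\varphi, y) := (1+c+14\varphi)\,y^2 - G(\varphi), \qquad y = \varphi',
\end{equation*}
where $G$ is an explicit quintic polynomial in $\varphi$ whose coefficients depend on $c$ and on the two integration constants. This furnishes the Hamiltonian formulation advertised in the introduction and reduces the existence question to analysing the level curves $\{\mathcal H = 0\}$ in the $(\varphi, y)$-plane.

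Next I would introduce the asymptotic level $s$ of the wave as a second parameter by requiring that $(s,0)$ be an equilibrium of the planar system. Since the orbit through an equilibrium must be tangent to itself there, this is equivalent to $G(s) = G'(s) = 0$, two conditions that determine the integration constants uniquely in terms of $c$ and $s$. Factoring $G(\varphi) = (\varphi - s)^2\,Q(\varphi; c, s)$ for an explicit cubic $Q$, the bounded orbit through $(s, 0)$ is controlled by (i) the real roots of $Q$ and (ii) the position of the singular line $\varphi = -(1+c)/14$ on which the coefficient of $\varphi''$ vanishes. If the bounded component of $\{\mathcal H = 0\}$ lies strictly on one side of this singular line, the corresponding solitary or periodic wave is smooth; if the component crosses the line transversally, the profile develops a corner and one obtains a peaked periodic wave; and if the singular line passes exactly through $(s, 0)$, i.e. $c + 1 + 14s = 0$, the wave acquires compact support. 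Symmetry of the profile about its crest or trough follows from the invariance $y \mapsto -y$ of $\mathcal H$, monotonicity from the constant sign of $y$ on each half of the homoclinic orbit, and exponential decay to $s$ at infinity from linearising at the hyperbolic saddle $(s, 0)$.

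The main obstacle will be the bookkeeping for the cubic $Q(\varphi; c, s)$ in step (ii): one must determine the sign of its discriminant and the relative position of its real roots with respect to $s$ and to $-(1+c)/14$ in order to pin down the five regions of the $(c, s)$-plane claimed in the theorem, and to isolate the exceptional speed $c^*$ at which the root configuration degenerates so that no smooth solitary wave exists while periodic waves persist. The monotonicity of the amplitude with respect to $c$ in each region will then follow by implicitly differentiating the relevant root of $Q$ with respect to $c$. Handling the compactly supported and peaked waves, where classical solutions fail and \eqref{MAE} must be interpreted in a weak sense along the singular line, will require an extra regularity argument analogous to the one used for Camassa--Holm peakons.
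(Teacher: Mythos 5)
Your proposal is correct and follows essentially the same route as the paper: the same first integral $H=F(u)+\tfrac12 v^2(u-\bar u)$ (you obtain it by multiplying by $\varphi'$ and integrating directly, while the paper additionally reparametrizes time by $u-\bar u$ to turn the system into a genuine polynomial Hamiltonian system before reading off the level curves), the same elimination of the integration constant in favour of the asymptotic level $s$ via $F'(s)=0$, and the same role of the singular line $c+1+14s=0$ for the compactly supported and peaked waves. The region classification and amplitude monotonicity you defer to ``bookkeeping'' is precisely what the paper carries out in Section 3 and the appendices via resultants, discriminants and Sturm's method.
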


\begin{figure}[h]
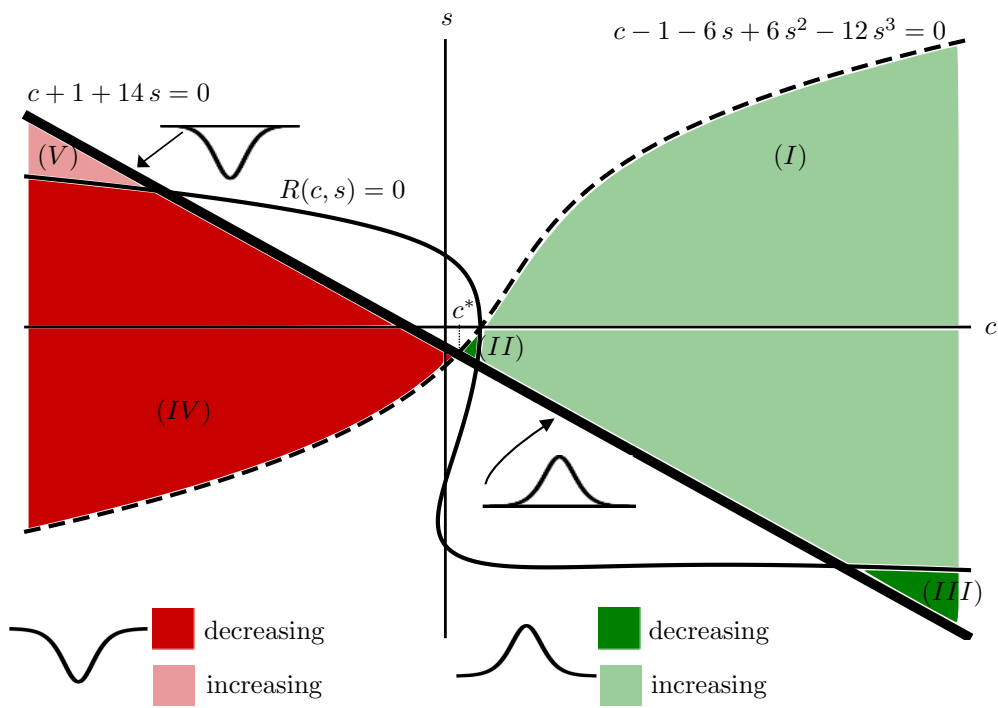

  \centering
    \begin{lpic}[l(0mm),r(0mm),t(0mm),b(0mm)]{regionsampcp(0.65)}
      \lbl[l]{155,112;$(I)$}
      \lbl[l]{185,23;$(III)$}
      \lbl[l]{95,73;$(II)$}
      \lbl[l]{90,81;$c^*$}
      \lbl[l]{30,60;$(IV)$}
      \lbl[l]{6,112;$(V)$}

      \lbl[m]{199,77;$c$}
      \lbl[m]{89,140;$s$}
      \lbl[l]{4,125;$c+1+14\,s=0$} 
      \lbl[r]{190,138;$c-1-6\,s+6\,s^2-12\,s^3=0$}

      \lbl[l]{55,105;$R(c,s)=0$}  
      
      \lbl[m]{52,15;decreasing}
      \lbl[m]{52,4; increasing}      
      
      \lbl[m]{142,15;decreasing}
      \lbl[m]{142,4; increasing}  
        
    \end{lpic}
   \caption{We obtain a variety of traveling waves for \eqref{MAE} with different behaviour, cf.~Thm \ref{thm}: solitary waves of \emph{elevation} and \emph{depression} which \emph{increase} or \emph{decrease} with $c$. Along the straight line $c+1+14\,s=0$, we obtain solitary waves with \emph{compact support}. The algebraic curve $R(c,s)=0$ arising from a polynomial of degree nine is given in Section \ref{subsect:ampl} below.
 }
 \label{Fig:amp}
 \end{figure}
 
\noindent The proof of these results hinges on the observation that for traveling waves, equation \eqref{MAE} may be written as an autonomous Hamiltonian system involving two parameters. This insight allows us to explicitly determine bounded orbits in the phase plane which correspond to solitary and periodic traveling waves of elevation as well as depression (Section \ref{subsect:Hamiltonian} and  \ref{subsect:CondExistence}). Moreover, we characterize all solitary traveling waves in terms of two parameters --  the wave speed $c$ and the water level $s$ of the undisturbed surface at infinity. This enables us to prove the existence a family of solitary waves with compact support (Section \ref{Section:finitehomorbit}). Furthermore, we obtain a family of peaked periodic waves (Section \ref{subsect:peaked}). Our work also  extends the analysis of qualitative properties regarding the shape of solitary waves given in \cite{Gey12c}: we prove that the profile is strictly monotonic between crest and trough, and derive explicit algebraic curves in the parameter space $(c,s)$ to determine the regions where their amplitude  is increasing and  decreasing with respect to the wave speed $c$ (Section \ref{sect:Props}). These are quite remarkable properties of solitary waves which, to our knowledge, contribute novel aspects to the study of traveling waves in evolution equations for water waves. Our approach is in fact applicable to a large class of nonlinear dispersive equations, which we exemplify by a discussion of traveling waves of the aforementioned Camassa--Holm equation, including peaked continuous solitary waves (Section \ref{sect_CH}). Some of the more involved computations regarding the algebraic curves are carried out in Appendix \ref{A:}.
%
%\noindent The paper is structured in the following way: In Section \ref{subsect:Hamiltonian} we express the equation for traveling waves associated to \eqref{MAE} as an autonomous Hamiltonian system and show how traveling waves arise from homoclinic and periodic orbits in the phase plane. In \ref{subsect:CondExistence} we derive conditions for the existence of such bounded orbits in terms of the parameters for the speed $c$ and the undisturbed water level $s$. In Section \ref{Section:finitehomorbit} we show that \eqref{MAE} admits smooth solitary waves which have compact support, whereas for other choices of parameters, in \ref{subsect:peaked} we deduce the existence of peaked periodic traveling waves. In Section  we extend the analysis of qualitative properties regarding the shape of solitary waves by proving that they are not only symmetric and decay to a constant on either side of their unique maximum or minimum (this follows directly from the discussion in \ref{subsect:Hamiltonian}), but that they are also strictly monotone between crest and trough (cf.\ref{subsect:monot}). In Section \ref{subsect:ampl} 
%
 
 %%%%%%%%%%%%%%%%%%%%%%%%%%%%%%%%%%%%%%%%%%%%%%%%%%%%%%%%%%%%%%%%%%
 %%%%%%%%%%%%%%%%%%%%%%%%%%%%%%%%%%%%%%%%%%%%%%%%%%%%%%%%%%%%%
 %%%%% 
 \newpage
 \section{Existence of traveling waves}
 \label{sect:Existence}
The proof of Theorem \ref{thm} relies on the fact that equation \eqref{MAE} has very nice structural features.
\subsection{Hamiltonian Formulation}
\label{subsect:Hamiltonian}
 Consider a general partial differential equation with constant coefficients in $u(t,x)$ for $(t,x)\in \R^2$, which upon introducing the traveling wave Ansatz 
\begin{equation*}
 \label{cov}
  \xi = x-c\,t, \quad u(\xi)=u(t,x),
\end{equation*}
can be transformed into an autonomous ordinary differential equation of the form
  \begin{equation}
      \label{MAEODE}
      \ddot u\,(u-\ub) + \frac{1}{2}(\dot u)^2 + F'(u)= 0,
  \end{equation} 
 where $\ub$ is a constant, $F(u)$ is a smooth function and  the dot denotes differentiation with respect to $\xi$. The corresponding planar system is given by
  \begin{equation}
  \label{Sys}
       \left\{
      \begin{array}{l l }
	\dot u =v\vspace{0.8em}\\
	\dot v = \dfrac{- F'(u) - \frac{1}{2}\,v^2}{u-\ub},
      \end{array}\right.
  \end{equation}
and we observe that  a reparametrisation of the independent variable according to $\frac{d\xi}{d\tau}=u-\ub$ transforms \eqref{Sys} into
\begin{align}
  \label{HSys} 
      \left\{
      \begin{array}{l l }
	u' = (u-\ub)\,v \vspace{0.8em}\\
	v' = - F'(u) - \frac{1}{2}\,v^2,\\
      \end{array}\right.
  \end{align} 
where the prime denotes differentiation with respect to  $\tau$. The latter system  is clearly topologically equivalent to the former (cf.~\cite{Dumortier2009, Guggenheimer1983})  on each connected component of $\R \backslash \{u=\ub\}$, preserving orientation in the open half-plane $\{u>\ub\}$ and reversing orientation in the other half. 
%,  there exists a homeomorphism defined on the connected components of $\R/\{u=\ub\}$ which maps the solution curves of \eqref{Sys} to those of \eqref{HSys},
The advantage of \eqref{HSys} is that it possesses a Hamiltonian
  \begin{equation}
   \label{Hamiltonian}
     H(u,v) = F(u) + \frac{1}{2}\,v^2\,(u-\ub) =h
  \end{equation}
satisfying $u'=H_v$ and $v'=-H_u$, which is constant along the solution curves of \eqref{HSys}. Explicit knowledge of the critical points and (closed) orbits 
  \begin{equation}
    \label{v}
      v=\pm \sqrt{2\,\dfrac{h-F(u)}{u-\ub}}
  \end{equation}
  in the phase plane of \eqref{HSys} therefore completely characterizes the smooth traveling wave solutions of the partial differential equation. Applying these ideas to \eqref{MAE}, we find that the associated  equation for traveling waves
%  \begin{equation*}
%      (1-c)\dot u + 6u\dot u-6u^2\dot u + 12 u^3\dot u+(1+c)\dddot u + 14 u\dddot u+ 28 \dot u \ddot u = 0,
%  \end{equation*} which upon integration and division by 14 
may be written in the form \eqref{MAEODE} with
 \begin{equation}
    \label{FMAE}
	F(u)=  K\,u + \frac{1-c}{28}u^2 + \frac{1}{14}u^3 - \frac{1}{28}u^4 + \frac{3}{70}u^5,
 \end{equation}
 where $K$ is a constant of integration and
 \[
	\ub = -\frac{1+c}{14}.
 \]
In view of the above considerations, we obtain the following existence result for bounded traveling wave solutions of \eqref{MAE}:
\begin{Proposition}
\label{Prop:orbits_waves}
Solitary wave solutions of \eqref{MAEODE} are obtained from the homoclinic connection based at the saddle point of \eqref{HSys}, whereas periodic waves correspond to periodic orbits around the center. These solutions are symmetric with respect to the crest/trough and have one maximum per period. The solitary waves tend  exponentially to a constant on either side of the crest/trough.
\end{Proposition}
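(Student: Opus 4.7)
My plan is to read off the qualitative behaviour of travelling wave solutions from the phase portrait of the planar Hamiltonian system \eqref{HSys}, whose orbits are the level sets of $H$, and then transfer these conclusions back to \eqref{MAEODE} via the reparametrisation $\mathrm{d}\xi/\mathrm{d}\tau = u-\ub$. First I would classify the critical points of \eqref{HSys} off the singular line $\{u=\ub\}$: these are exactly the points $(u_0,0)$ with $F'(u_0)=0$, and the Jacobian there has eigenvalues $\pm\sqrt{-F''(u_0)(u_0-\ub)}$, so the equilibrium is a hyperbolic saddle when $F''(u_0)(u_0-\ub)<0$ and a (linear) centre when $F''(u_0)(u_0-\ub)>0$. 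Together with the explicit level-set formula \eqref{v} this yields the phase portrait.

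Symmetry of the wave profile is an immediate consequence of the reversibility of \eqref{HSys} under the involution $(v,\tau)\mapsto(-v,-\tau)$: every orbit is invariant under reflection across the $u$-axis. A periodic orbit surrounding a centre is thus a simple closed curve that meets $\{v=0\}$ in exactly two points $u_{\min}<u_{\max}$, and a homoclinic loop based at a saddle $(u_0,0)$ is the level set $\{H=F(u_0)\}$, again symmetric in $v$, meeting $\{v=0\}$ at $u_0$ and at one further point $u_1$ determined by $F(u_1)=F(u_0)$ with $u_1\neq u_0$.

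The main obstacle is to carry this phase-plane picture faithfully back to the original travelling-wave ODE \eqref{MAEODE}, because the reparametrisation $\mathrm{d}\xi/\mathrm{d}\tau=u-\ub$ is only a local diffeomorphism on connected components of $\R\setminus\{u=\ub\}$. What is needed is that the centre, respectively the saddle together with its homoclinic loop, lie entirely in $\{u>\ub\}$ or entirely in $\{u<\ub\}$; under this hypothesis the very same curve in the $(u,v)$-plane is an orbit of both \eqref{Sys} and \eqref{HSys}, with orientation preserved on $\{u>\ub\}$ and reversed on $\{u<\ub\}$. On the upper arc $\{v>0\}$ of a periodic orbit one then has $\dot u=v>0$ and on the lower arc $\dot u=v<0$, which gives strict monotonicity between the two turning points and hence a profile $u(\xi)$ with precisely one maximum and one minimum per period, symmetric about these extrema; the same argument applied to the homoclinic loop yields a single crest or trough about which the solitary wave is symmetric and monotonic.

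Finally, exponential decay of the solitary wave follows from standard stable/unstable manifold theory applied to \eqref{HSys} at the hyperbolic saddle $(u_0,0)$: the homoclinic orbit tends to the saddle at the rate $e^{-\lambda|\tau|}$, with $\lambda=\sqrt{-F''(u_0)(u_0-\ub)}>0$. Since $u_0\neq\ub$, integrating $\mathrm{d}\xi/\mathrm{d}\tau=u(\tau)-\ub$ gives $\xi(\tau)=(u_0-\ub)\tau+O(1)$ as $|\tau|\to\infty$, so the exponential convergence in $\tau$ transfers to exponential decay of $u(\xi)-u_0$ in the original travelling-wave variable, which completes the proof of the proposition.
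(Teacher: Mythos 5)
Your proposal is correct and follows essentially the same route as the paper: a phase-plane analysis of the Hamiltonian system \eqref{HSys}, classifying the equilibria $(u_0,0)$ with $F'(u_0)=0$ as saddles or centres according to the sign of $F''(u_0)(u_0-\ub)$, reading off symmetry and the single crest/trough from the structure of the level sets of $H$, and deducing exponential decay from hyperbolicity of the saddle. The only (harmless) variations are that you obtain symmetry from the reversibility $(v,\tau)\mapsto(-v,-\tau)$ rather than from the explicit formula \eqref{v}, and you linearize \eqref{HSys} and transfer the decay rate back through $\mathrm{d}\xi/\mathrm{d}\tau=u-\ub$, whereas the paper applies the Hartman--Grobman theorem directly to \eqref{Sys}; your extra care about the orbit lying entirely on one side of $\{u=\ub\}$ is exactly the condition the paper imposes in the subsequent subsection.
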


\begin{proof}
In order to obtain bounded orbits of system \eqref{HSys}, we study the critical points of the Hamiltonian system which are closely related to the local extrema of $F$, since $(u',v') = (0,0)$ exactly when $v=0$ and $F'(u)=0$. Notice that $u=\ub$ is an invariant line for \eqref{HSys}. The fact that any non-degenerate critical point of an analytic Hamiltonian system is either a topological saddle or a center (cf.~\cite{Perko2006}, p.\,154) simplifies our analysis considerably. Computing the Jacobian $J$ of \eqref{HSys} and evaluating it at the critical points shows that $\det J=F''(u)\,(u-\ub)$. Recall that a non-degenerate critical point is a center whenever $\det J >0$ and a topological saddle when $\det J<0$, cf.~\cite{Perko2006}. Hence, all further analysis regarding the number, location and type of critical points in the phase plane is based on the specific structure of the polynomial $F$, depending on the parameters $c$ and $K$.  It is straightforward to check that $F$ has at most two local extrema
%\footnote{Indeed, $F(u)$ is a fifth order polynomial whose highest coefficient is a positive constant. If we write $F(u)=u\,G(u)$, then $G''(u)>0$ for all $u\in\R$ and so $G'$ is strictly increasing with exactly one real root. Therefore, $G$ has precisely one local minimum and hence $F(u)$ has at most two local extrema.} 
and we conclude that system \eqref{HSys} has at most two critical points. \\

\noindent Next we show how to obtain the expressions for bounded orbits corresponding to bounded traveling wave solutions from relation \eqref{v}, cf. Figure \ref{orbits}, and infer some basic properties of the waves. Homoclinic orbits are obtained by letting $h=h_s$, where $h_s=F(s)$ and  $s$ solves
\begin{align}
\label{homoclinicorbits}
	&F'(y)=0 \quad \text{ and }\quad 
    \left\{
    \begin{array}{ll}
	    F''(y) < 0 \;\text{ when } y > \ub,\\
		F''(y) > 0 \;\text{ when } y < \ub.
    \end{array}\right.
\end{align} 
We will see below that this definition of $s$ giving rise to  the saddle point $(s,0)$ of system \eqref{HSys}, aptly captures the physical interpretation of $s$ as the level of  the undisturbed water surface at infinity  in certain regions of the parameter space. \\

\noindent Regarding $v$ in \eqref{v} as a function of $u$ and choosing $h=h_s$ yields an explicit expression of the homoclinic connection in the phase plane. The orbit leaves the critical saddle point $(s,0)$ and crosses the horizontal axis once at $(m,0)$, before returning to the saddle point symmetrically with respect to the horizontal axis. The value $m$ is obtained at the unique intersection of the horizontal line $h_s$ with the polynomial $F$, where $F(m)=F(s)$. The corresponding solitary wave solution therefore has a unique maximum $m$ and is symmetric with respect to the crest point. 
\begin{figure}[!ht]
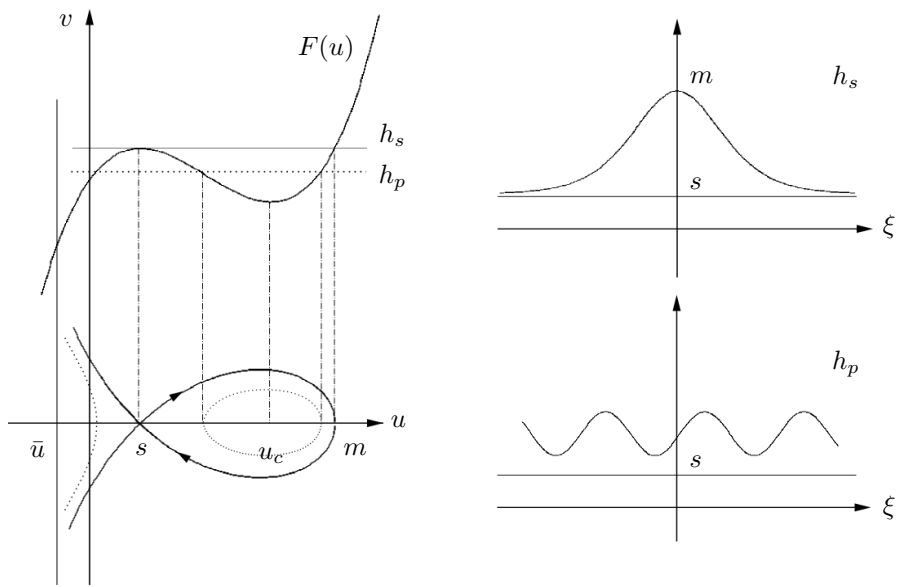

 \centering
 \begin{lpic}[l(5mm),r(15mm),t(0mm),b(0mm)]{orbits_waves_nonames(0.38)}
      \lbl[m]{110,190;$F(u)$}
      \lbl[m]{133,160;$h_s$}
      \lbl[m]{133,145;$h_p$}
      \lbl[m]{135,60;$u$}
      \lbl[m]{20,200;$v$}
      \lbl[m]{10,50;$\ub$}
      \lbl[m]{120,50;$m$}
      \lbl[m]{91,48;$u_c$}
      \lbl[m]{46,50;$s$}
      \lbl[m]{290,180;$h_s$}
      \lbl[m]{240,180;$m$}
      \lbl[m]{238,143;$s$}
      \lbl[m]{290,80;$h_p$}
      \lbl[m]{238,46;$s$}
      \lbl[m]{305,128;$\xi$}
      \lbl[m]{305,30;$\xi$}
  \end{lpic}
   \caption{A sketch of how bounded orbits in the $(u,v)$ phase plane are obtained using relation \eqref{v}, where choosing $h=h_p$ and $h=h_s$ give rise to periodic and solitary traveling waves, respectively. }
     \label{orbits}
\end{figure}
%Indeed, a Taylor expansion about the flat surface at $u=s$ of the right hand side of 
%\begin{equation*}
%   ( \dot u)^2 = 2\,\frac{h_s-F(u)}{u-\ub}
%\end{equation*}
%shows that at leading order $u$ satisfies the differential equation
%\begin{equation*}
%     (\dot u)^2 =- \frac{F''(s)}{s-\ub}(u-s)^2.
%\end{equation*}
%Therefore, when $|\xi| \rightarrow \infty$ the solution decays to the flat surface like 
%\begin{equation*}
%    u(\xi) \approx s +\exp(-\sqrt{-\frac{F''(s)}{s-\ub} }|\xi|).
%\end{equation*}
Furthermore, the solution decays exponentially to the constant $s$ on either side of the crest. Indeed, by the Hartman-Grobman Theorem (cf.~\cite{Teschl2011a,Sotomayor1979}) the vectorfield $(\dot u,\dot v)$ is locally $\C^1$-conjugate to its linearization at the hyperbolic saddle point $(s,0)$, and therefore solutions on the stable manifold converge exponentially to the fixed point. The decay rate is given by the eigenvalues of the Jacobian at the saddle point of system \eqref{Sys},  hence we recover the result for $K=0$ obtained in \cite{Gey12c}.  Observe that for $s<\ub$ we obtain solitary waves of depression with the same qualitative properties. \\

\noindent Similarly to solitary waves, periodic traveling waves are obtained by choosing $h\in(h_c,h_s)$ in \eqref{v} where $h_c=F(u_c)$  and $u_c$ is a solution  of 
 \begin{align*}
 %\label{periodicorbits}
	&F'(y)=0 \quad \text{ and } \quad
    \left\{
    \begin{array}{ll}
	    F''(y) > 0 \;\text{ when } y > \ub,\\
		F''(y) < 0 \;\text{ when } y < \ub.
    \end{array}\right.
 \end{align*} 
These periodic waves undulate about $u=u_c$.
\end{proof}

\subsection{Conditions for the existence of solitary traveling waves}
\label{subsect:CondExistence}
We now derive algebraic conditions for the existence of homoclinic orbits, which give rise to solitary  traveling wave solutions of \eqref{MAE} as we have just seen. 
At this point, our problem involves  three interdependent parameters: 
\begin{center}
 \begin{tabular}{c c l}
    $c$ & $\dots$ & the wave speed, \\
    $K$ & $\dots$ & the constant of integration, \\
    $s$ & $\dots$ & the level of the undisturbed water surface at infinity.
 \end{tabular}
\end{center}
It turns out to be more convenient to eliminate the parameter $K$ in favor of $s$ using relation $F'(s)=0$ from the  definition \eqref{homoclinicorbits}  of $s$. This leads us to the following change of parameters:

\begin{equation}
\label{Kcs}
  \Phi: (c,s) \longmapsto (c,K) =(c,\varphi(c,s)),
\end{equation}
where 
\begin{equation}
\label{phi}
 \varphi(c,s) =\frac{1}{14}\,s \,( -3\,{s}^{3}+2\,{s}^{2}-3\,s+c -1).
\end{equation}
\begin{Remark}%[On the transformation of parameters \eqref{Kcs}-\eqref{condF''}]
\label{R:Ks}
Notice that $\Phi$ is not bijective on $\R^2$ . For instance, for each $c\in\R$ and  $K$ big enough the point $(c,K)$ has no preimage. However, this happens precisely when there are no simple solutions of $F'(s)=0$, in which case system \eqref{HSys} does not have homoclinic orbits. In the remaining cases, observe that for each fixed $c\in\R$ there exist two $s_1 \neq s_2$ such that $\varphi(c,s_1)=\varphi(c,s_2)=(c,K)$. This leads to a redundancy in the parameter regions, since for each $c$ there are two values $s$ which yield the same $K$, and hence the same $F$,  which gives rise to the same phase portrait of system \eqref{HSys}, and therefore also to the same wave solution. The difference between the two values of $s$ is that one of them, say $s_1$, satisfies \eqref{homoclinicorbits} and hence corresponds to the saddle point of system \eqref{HSys}, thus reconciling  the interpretation of $s$ being the undisturbed water level at infinity. The other value, $s_2$, which satisfies the reverse inequalities, gives rise to a center of the system. The solitary wave solution corresponding to the homoclinic orbit around this center $(s_2,0)$ decays to the undisturbed water level given precisely by the former value $s_1$, which means that this solitary wave solution is already obtained via the value $s_1$. To avoid this redundancy, we select  the value $s$ in \eqref{phi} satisfying
\begin{align}
\label{condF''}
      \left\{
    \begin{array}{ll}
	    F''(s) < 0 \;\text{ when } s > \ub,\\
		F''(s) > 0 \;\text{ when } s < \ub,
    \end{array}\right.
\end{align}
which  makes the transformation \eqref{Kcs} bijective on the relevant regions. In this way, we choose the value corresponding to the saddle point of the system, which is in accordance with the physical interpretation of $s$ being the level of the undisturbed water surface at infinity.
\end{Remark}

\noindent Performing the substitution $K=\varphi(c,s)$ given in  \eqref{phi} facilitates our analysis considerably and we proceed to study conditions for the existence of homoclinic orbits in terms of the parameters $s$ and $c$. 
The discussion in the proof of Proposition \ref{Prop:orbits_waves} ensures that \eqref{HSys} has at most two critical points corresponding to the local extrema of  $F$.  It is easy to see that homoclinic orbits exist when there is at most one saddle point and one center point in the phase plane, cf.~Figure \ref{orbits}. This situation occurs when  
\begin{enumerate}[(i)]
    \item $F$ has two distinct local extrema, and moreover,
    \item both extrema lie either to the left or to the right of the invariant line $u=\ub$. 
\end{enumerate}
To ensure condition (i) we study the roots of the discriminant of $F'$, which in view of the substitution \eqref{phi} is given by 
\begin{equation}
\label{boundADis}
    \text{Dis}(F'(u),u)=\alpha F''(s)^2\, M(c,s),
\end{equation}
where $\alpha<0$ is a real constant and $M(c,s)$ is a polynomial with no real roots, cf.~Appendix \ref{A:1}. Condition (ii) holds whenever $F'(\ub)$ is non-zero, i.e.~we study the roots of 
\begin{equation}
\label{boundAdF}
    F'(\ub)=\beta (s-\ub)\, N(c,s),
\end{equation}
where $\beta>0$ is a real constant and $N(c,s)$ is a cubic polynomial in $c$ and $s$. The algebraic curves corresponding to the zeros of these factors intersect precisely in one point $(c^*,s^*)$ in the parameter plane, cf.~Appendix \ref{A:1} and Figure \ref{MAEregions}. We distinguish the following six regions:
\begin{align}
 \begin{tabular}{c | c}
         $ s>\ub$                                 &       $s<\ub$                           \\ \hline
    $R_1$: $F''(s)<0$ and $N(c,s)>0$ & $R_4$:  $F''(s)>0$ and $N(c,s)<0$ \\
    $R_2$: $F''(s)>0$ and $N(c,s)>0$ & $R_5$:  $F''(s)<0$ and $N(c,s)<0$ \\
    $R_3$: $F''(s)>0$ and $N(c,s)<0$ & $R_6$:   $F''(s)<0$ and $N(c,s)>0$ 
 \end{tabular}
  \label{regions}
\end{align}

\begin{figure}[h]
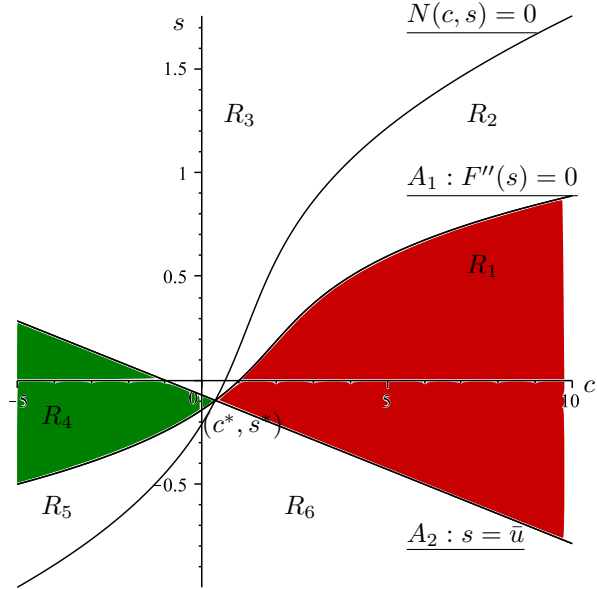

 \centering
  \centering    
 \begin{lpic}[l(0mm),r(5mm),t(-2mm),b(-3mm)]{regions_color(0.4)}
      \lbl[m]{160,110;$R_1$}
      \lbl[m]{160,160;$R_2$}
      \lbl[m]{80,160;$R_3$}
      \lbl[m]{20,60;$R_4$}
      \lbl[m]{20,30;$R_5$}
      \lbl[m]{100,30;$R_6$}

      \lbl[m]{81,57;$(c^*,s^*)$}      
      \lbl[m]{60,190;$s$}
      \lbl[m]{195,70;$c$}
%      \lbl[l]{70,55;\tiny{$(c_I,s_I)$}}
      
      \lbl[l]{135,138;$\underline{A_1: F''(s)=0}$}
      \lbl[l]{135,20;$\underline{A_2: s=\ub}$}
      \lbl[l]{135,192;$\underline{N(c,s)=0}$}
  \end{lpic}
   \caption{The shaded region $\A=R_1\cup R_4$ in the parameter plane $(c,s)$ yields homoclinic orbits which give rise to solitary wave solutions of \eqref{MAEODE} traveling at speed $c$ and decaying to the undisturbed water level $s$ at infinity. When $s>\ub$ we obtain solitary waves of elevation $(R_1)$, whereas for $s<\ub$ we obtain solitary waves of depression  $(R_4)$. }
     \label{MAEregions}
\end{figure}
\noindent Let us focus first on the regions where $s>\ub$. Choosing $(c,s)$ in $R_3$, we have that $F'(\ub)<0$ in view of \eqref{boundAdF} and \eqref{regions}, meaning that there is one extremum of $F$ on each side of the invariant line $\ub$. In this case, both extrema of $F$ yield a center for system \eqref{HSys} which impedes the existence of a homoclinic connection. Hence, region $R_3$ gives rise to periodic orbits only. Choosing $(c,s)$ in $R_2$ violates the condition on $F''(s)$ in \eqref{condF''}, and hence we refrain from any further analysis (recall the redundancy discussed in Remark \ref{R:Ks}). Finally, $(c,s)$ in $R_1$ yields $F$ with two distinct local extrema to the right of $\ub$ since $F'(\ub)>0$, and hence $F$ has a local maximum in $s$ in view of $F''(s)<0$. The homoclinic orbit based in the corresponding saddle point $(s,0)$ gives rise to a solitary traveling wave solution of \eqref{MAE} which propagates at speed $c$ and decays to the undisturbed water level $s$ at infinity. Notice that all solitary wave solutions obtained in this way from parameters in region $R_1$ are waves of elevation. Applying the same reasoning to the regions where $s<\ub$, we find that $R_6$ gives rise to periodic orbits only, whereas choosing parameters in region $R_4$ yield solitary waves of depression. We conclude that there are two algebraic curves $A_1$ and $A_2$ bounding the region $\A=R_1 \cup R_4$ which admits solitary traveling wave solutions of \eqref{MAE}:
\begin{align}
 \left\{
     \begin{array}{ll}
        A_1 := \{ F''(s)  =-1-6\,s+6\,{s}^{2}-12\,{s}^{3}+c=0\}  \vspace{1em} \\
        A_2 := \{s-\ub = s+ \dfrac{1+c}{14}=0\}.
     \end{array}
 \right.
\label{boundsA}
\end{align}
We summarize our conclusions in the following Proposition. 
 \begin{Proposition}
 \label{Lem:Conditions_orbits}
 Solitary wave solutions of \eqref{MAE} propagating at speed $c$ and decaying to the undisturbed water level $s$ at infinity exist if and only if $(c,s) \in \A=R_1 \cup R_4$, the region which is bounded by the algebraic curves $A_1$ and $A_2$ defined in \eqref{boundsA} above. Parameters in $R_1$ yield solitary waves of elevation, whereas $R_4$ gives rise to solitary waves of depression.
 \end{Proposition}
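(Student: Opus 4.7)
The plan is to reduce Proposition \ref{Lem:Conditions_orbits} to an algebraic classification in the two-parameter plane $(c,s)$ and then to read off the admissible region from the sign analysis already sketched in the paragraph preceding the statement. By Proposition \ref{Prop:orbits_waves}, a solitary traveling wave of \eqref{MAE} at speed $c$ decaying to level $s$ corresponds exactly to a homoclinic orbit of \eqref{HSys} based at the saddle $(s,0)$, and this exists precisely when (i) $F$ has two distinct local extrema and (ii) both lie on the same side of the invariant line $u=\ub$, with $s$ being the one satisfying \eqref{condF''}. The first step is therefore to replace the triple $(c,K,s)$ by the pair $(c,s)$ via the change $K=\varphi(c,s)$ of \eqref{Kcs}; the sign convention in Remark \ref{R:Ks} makes this bijective on the relevant regions and pins down $s$ as the saddle coordinate.

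Next I would translate (i) and (ii) into polynomial sign conditions. By \eqref{boundADis}, the discriminant of $F'$ factors as $\alpha F''(s)^2 M(c,s)$ with $\alpha<0$ and $M$ of constant sign on $\R^2$ (Appendix \ref{A:1}), so condition (i) fails exactly on the curve $A_1=\{F''(s)=0\}$. By \eqref{boundAdF}, $F'(\ub)=\beta(s-\ub)N(c,s)$, so condition (ii) is controlled by the curves $A_2=\{s=\ub\}$ and $\{N(c,s)=0\}$. These three curves partition the plane into the six open regions of \eqref{regions}, and I would then sweep through them using the Jacobian test $\det J=F''(u)(u-\ub)$ recalled in the proof of Proposition \ref{Prop:orbits_waves}. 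In $R_1$ we have $s>\ub$, $F''(s)<0$ and $F'(\ub)>0$; the saddle $(s,0)$ is a maximum of $F$, the second extremum lies also to the right of $\ub$ as a center, and the homoclinic connection exists, producing a wave of elevation by the geometric description in Figure \ref{orbits}. In $R_4$ the same argument produces a wave of depression. In $R_3$ and $R_6$ one has $F'(\ub)<0$, so the two extrema lie on opposite sides of $\ub$ and both become centers, precluding any homoclinic orbit. Finally, $R_2$ and $R_5$ violate \eqref{condF''} and are discarded by the redundancy of Remark \ref{R:Ks}.

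The only substantive obstacle is the verification that no further curve contributes to the boundary of $\A$. Concretely, one needs to check that $M(c,s)$ is indeed sign-definite on $\R^2$ (so that $A_1$ completely governs condition (i)) and that although $\{N(c,s)=0\}$ appears in the factorization of $F'(\ub)$, it only separates regions that are already excluded on other grounds, so that the effective frontier of $\A=R_1\cup R_4$ is $A_1\cup A_2$ and the two components meet at the single intersection point $(c^{*},s^{*})$ of these two curves. These are precisely the facts established in Appendix \ref{A:1}, which I would invoke to close the argument.
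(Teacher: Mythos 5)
Your proposal follows essentially the same route as the paper: the reduction to homoclinic orbits via Proposition \ref{Prop:orbits_waves}, the change of parameters $K=\varphi(c,s)$ with the saddle selection of Remark \ref{R:Ks}, the translation of the two existence conditions into the sign of $\mathrm{Dis}(F',u)=\alpha F''(s)^2M(c,s)$ and of $F'(\ub)=\beta(s-\ub)N(c,s)$, and the region-by-region sweep discarding $R_3,R_6$ (two centers) and $R_2,R_5$ (redundancy) to leave $\A=R_1\cup R_4$ bounded by $A_1$ and $A_2$. The argument is correct and matches the paper's, including the appeal to Appendix \ref{A:1} for the sign-definiteness of $M$ and the single intersection point $(c^*,s^*)$.
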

 
\subsection{Solitary waves with compact support}
 \label{Section:finitehomorbit}
 It turns out that equation \eqref{MAE} admits smooth solitary wave solutions with compact support on $\R$. This is essentially due to the fact that the planar system  \eqref{Sys} is discontinuous along the straight line $u=\ub$ (for a detailed account on various evolution equations arising in the context of nonlinear water waves which yield so-called ''singular nonlinear travelling wave systems`` we refer to \cite{Li2013}). The intuition behind this surprising observation is that the homoclinic orbit corresponding to the compactly supported solitary wave has finite existence time when the local maximum of $F$ lies at the invariant line $\bar{u}$,  cf.~Figure \ref{Fig_homorbit}. This situation occurs precisely on the curve $A_2$, where $s=\ub$, in which case the level line of the Hamiltonian corresponding to the homoclinic orbit based in the saddle point is $h_s=F(\ub)$ and $F'(\ub)=0$. Therefore, relation \eqref{v} simplifies to 
 \begin{align}
  \label{vkl}
    v = \pm \sqrt{2\, \dfrac{ F(\ub) - F(u)}{u-\ub}} = \pm \sqrt{(u-\ub)\;p\,(u)},
 \end{align}
 where
 \begin{equation*}
 p\,(u):=F''(\ub) + \frac{2}{3!}F^{(3)}(\ub)(u-\ub) + \dots + \frac{2}{5!}F^{(5)}(\ub)(u-\ub)^3.
\end{equation*}
\begin{figure}[h]
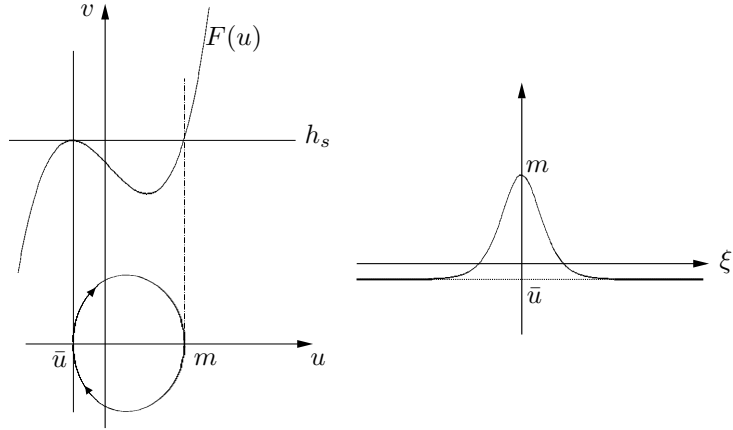

  \centering    
 \begin{lpic}[l(0mm),r(5mm),t(0mm),b(0mm)]{cpsolwave_noname(0.37)}
      \lbl[m]{100,140;$F(u)$}
      \lbl[m]{130,105;$h_s$}
      \lbl[m]{130,25;$u$}
      \lbl[m]{38,25;$\ub$}
      \lbl[m]{90,25;$m$}
      \lbl[m]{207,48;$\ub$}
      \lbl[m]{208,94;$m$}
      \lbl[m]{48,150;$v$}
      \lbl[m]{275,60;$\xi$}
  \end{lpic}
 \caption{The choice of parameters $c\in\R$ and $s=\ub$ in $F$ yields a homoclinic orbit with finite existence time, which gives rise to a solitary wave solution with compact support.}
     \label{Fig_homorbit}
\end{figure}

\noindent In particular, the existence time of these homoclinic orbits is finite. Indeed, notice that
\[
 T(u,u_0):= \int_{u_0}^{u} \frac{\dd r}{\sqrt{(r-\ub)\,p(r)}}\,
\]
is an elliptic integral and therefore finite, since $p(r)$ is a third degree polynomial with no repeated roots and $\ub$ is not a root of $p(r)$. In view of  \eqref{vkl} this yields 
\[
  T(u(\xi),u_0)=\int_{u(\xi_0)}^{u(\xi)} \frac{\dd r}{\sqrt{(r-\ub)p(r)}}= \int_{\xi_0}^{\xi} \frac{\sqrt{(u-\ub)p(u)}}{\sqrt{(u-\ub)p(u)}}\dd \xi =\xi-\xi_0,
\]
for a solution of $\dot u(\xi)=\sqrt{(u-\ub)p(u)}$ with initial data $u(\xi_0)=u_0$. Hence, the time it takes an orbit to get from $\ub$ to $m$, where $m$ is the non-trivial solution of $F(\ub)=F(m)$,  is given by
\[
  T := T(u(\xi),\ub) - T(u(\xi),m)= \int_{\ub}^{m}{\dfrac{\dd r}{ \sqrt{(r-\ub)\,p(r)}}} <\infty.
\]
By symmetry it follows that the solitary traveling wave solution corresponding to the orbit with $h_s=F(\ub)$ is defined on the finite interval $(-T,T)$. We may extend this solution to the real line by setting $u(\xi)=\ub$ for $\xi \in \R\backslash(-T,T)$. This is possible since $u=\ub$ is a constant solution of \eqref{MAEODE} when  $s=\ub$. Furthermore, $u(\xi) \rightarrow \ub$ as $\xi \rightarrow \pm T$ and therefore $v \rightarrow 0$ and $\dot v\rightarrow 0$ in view of \eqref{Sys}, applying de L'Hopital. This proves that the extension to $\R$ is $\C^2$. When $\xi$ approaches the finite existence time $\pm T$, the solution decays like 
\begin{equation}
    \label{decay}
    u(\xi) = \ub -\frac{1}{4} F''(\ub)(\xi\pm T)^2 + O((\xi\pm T)^3)
\end{equation}
which is readily checked.

\subsection{Peaked periodic waves}
\label{subsect:peaked}
For parameters in the regions $R_3$ and $R_6$, cf.~Figure \ref{MAEregions}, the invariant line $\ub$ lies between the two critical points of the polynomial $F$. In this case, the extrema of $F$ yield two centers in the phase portrait of \eqref{HSys} which impedes the existence of solitary waves (and hence, the parameter $s$ no longer accommodates the physical interpretation of the undisturbed water level at infinity). However, we show by continuous extension that there exist peaked periodic waves above and below the line $\ub$, and periodic waves undulating about $\ub$. \\

\noindent Indeed, for every $(c,s)\in R_3\cup R_6$, periodic waves are obtained as in Proposition \ref{Prop:orbits_waves} by choosing $h_p\in (h_1,h_2)$, where $h_i=F(u_i)$ for $i=1,2$, and $u_i$ is a solution of 
\begin{align*}
 \label{periodicorbits2}
	&F'(y)=0 \quad \text{ and } \quad
    \left\{
    \begin{array}{ll}
	    F''(y) > 0 \;\text{ when } y > \ub,\\
		F''(y) < 0 \;\text{ when } y < \ub,
    \end{array}\right.
 \end{align*} 
and employing \eqref{v}. We will now treat the special case $h_p=F(\ub)$.  Notice that, by construction,
\begin{align*}
    h_p- F(u) = (u - \ub)(u-m_1)(u-m_2)\,q(u),
\end{align*}
where $q(u)$ is a second order polynomial with no real roots and $m_i \neq \ub, i=1,2,$  are the other two intersections of the horizontal line $h_p$ with $F(u)$. Using \eqref{v}, we obtain two heteroclinic orbits of the system \eqref{HSys} leaving and returning to the invariant line $u=\ub$ given in terms of
\begin{equation}
 \label{vi}
  v_i = \pm \sqrt{2\,(u-m_1)(u-m_2)\,q(u)}, 
\end{equation}
for  $u\in(m_1,\ub)$ and $u\in(\ub,m_2)$ respectively, which intersect the horizontal axis at $m_1$ and $m_2$, where $m_1 < \ub < m_2$. Observe that for the topologically equivalent system \eqref{Sys}, the existence times of these orbits are again finite
%\footnote{Indeed, these are elliptic integrals, since the discriminants are fourth degree polynomials with no repeated roots.} 
% the existence times for the system HSys are infinite!
and given in terms of 
\begin{equation*}
  T_1= \int_{m_1}^{\ub}{\dfrac{\dd u}{\sqrt{2\,(u-m_1)(u-m_2)\,q(u)}}} < \infty \;\text{ 
and }\;  T_2= \int_{\ub}^{m_2}{\dfrac{\dd u}{\sqrt{2\,(u-m_1)(u-m_2)\,q(u)}}} < \infty.
\end{equation*}
\begin{figure}[!ht]
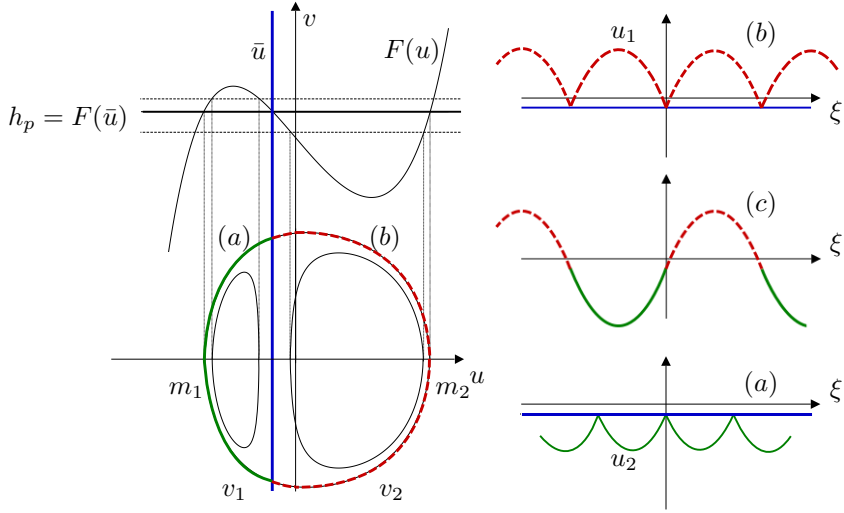

  \centering    
 \begin{lpic}[l(0mm),r(2mm),t(-5mm),b(0mm)]{peakedperiodicorbits_all(0.45)}
     \lbl[m]{88,135;$F(u)$}
     \lbl[m]{43,135;$\ub$}
     \lbl[r]{5,115;$h_p=F(\ub)$}
     \lbl[m]{22,35;$m_1$}
     \lbl[m]{100,35;$m_2$}
     
     \lbl[m]{107,39;$u$}
     \lbl[m]{58,144;$v$}
     
     \lbl[m]{212,117;$\xi$}
     \lbl[m]{212,78;$\xi$}
     \lbl[m]{212,35;$\xi$}         

     \lbl[m]{36,80;$(a)$}
     \lbl[m]{80,80;$(b)$}
%     \lbl[m]{77,55;$(c)$}
     
     \lbl[m]{36,5;$v_1$}
     \lbl[m]{82,5;$v_2$}     

     \lbl[m]{150,140;$u_1$}
     \lbl[m]{150,14;$u_2$}
     
     \lbl[m]{190,140;$(b)$}
     \lbl[m]{190,90;$(c)$}
     \lbl[m]{190,37;$(a)$}
 \end{lpic}
 \caption{The choice of parameters $(c,s)\in  R_3\cup R_6$ gives rise to peaked periodic waves $(a)$ and $(b)$, as well as smooth periodic waves undulating about $\ub$ $(c)$. }
 \label{peakedperiod}
\end{figure}

\noindent Other than in Section \ref{Section:finitehomorbit} it is not possible to continuously extend the corresponding solutions by $\ub$ on the real line to obtain solitary waves with compact support, since here $\ub$ (or any other constant) does not satisfy equation \eqref{MAEODE}. However, we may continue the solutions periodically and obtain peaked periodic waves $u_{i}$, cf.~Figure \ref{peakedperiod}. The period of these waves is $2T_i$, and they have countably many points of discontinuity at the wave crests or troughs, $\xi=(2k+1)\,T_i$ where $k\in\Z$, $i=1,2$, respectively. When $T_1=T_2$, we obtain $\C^2$-periodic traveling waves $u_P$ with period $4T_1$, undulating about the flat surface at $\ub$. Indeed, a continuity argument guarantees the existence of parameters $(c,s) \in R_3\cup R_6$ such that $T_1=T_2$. In this case, the peaked periodic solutions $u_i$ obtained from \eqref{vi} may be glued  together at $\xi=(2k+1)\,T_1$, $k\in\Z$, cf.~Figure \ref{peakedperiod}$(c)$, which yields a smooth periodic solution $u_P$ undulating about $\ub$ defined on $\R$ by
\begin{align*}
     u_P(\xi)=\left\{
    \begin{array}{ll}
	    u_{1}(\xi),\;\text{ when } \xi \in \bigcup_{k=2m+1, m\in\Z} [(2k-1)T_1,(2k+1)T_1],\vspace{1em}\\
	    u_{2}(\xi),\;\text{ when } \xi \in \bigcup_{k=2m, m\in\Z} [(2k-1)T_1,(2k+1)T_1].
    \end{array}\right.
\end{align*}
This  continuation is $\C^2$ since $u_1(\xi)\rightarrow \ub$ as $ \xi\nearrow (2k+1)T_1$, and therefore  
\[
    \dot u_1 = v_1 = \pm\sqrt{2\frac{F(\ub-F(u)}{u-\ub} }\rightarrow \sqrt{-2F'(\ub)}  \text{ and } \ddot u_1\rightarrow 0 \text{ as }  \xi\nearrow (2k+1)T_1,
\]
and similarly $u_2(\xi)\rightarrow \ub$,  $\dot u_2(\xi)\rightarrow \sqrt{-2F'(\ub)}$ and $\ddot u_2(\xi)\rightarrow 0$ as $ \xi\searrow (2k+1) T_1$, for all $k\in\Z$. The same reasoning shows that the continuation is $\C^2$ at the lower bounds of the existence intervals. 

%%%%%%%%%%%%%%%%%%%%%%%%%%%%%%%%%%%%%%%%%%%%%%%%%%%%%%%%%%%%%%%%%%
%%%%%%%%%%%%%%%%%%%%%%%%%%%%%%%%%%%%%%%%%%%%%%%%%%%%%%%%%%%%%%%%%%

 \section{Properties of solitary traveling waves}
 \label{sect:Props}
The analysis in Section \ref{sect:Existence} shows that traveling wave solutions of \eqref{MAE} are symmetric with respect to the crest point, and that solitary waves tend (exponentially) to a constant on either side of their unique maximum or minimum. In the present Section, we will explore further properties regarding the shape of traveling waves. We determine how the wave amplitude, which is the positive difference between crest and trough, changes with respect to the wave speed. Furthermore, we prove that traveling waves are strictly monotone between crest and trough. 
 \subsection{Dependence of the amplitude on the wave speed}
 \label{subsect:ampl}
Our starting point is an algebraic expression for the change of $a=m-s$ with respect to $c$. 
\begin{Lemma} \label{Lem}
Let $F$ be the polynomial defined in \eqref{FMAE} and let $(s,m)$ be a solution of 
\begin{align}
    \left\{
    \begin{array}{ll}
       F'(s) = 0, \\
       F(s) - F(m)=0,
    \label{MaxSys}
    \end{array}\right.
\end{align} where $s\neq \ub$. Then, for $a = m - s \in \R$, we have that
\begin{equation}
 \label{adot}
  \partial_c\, a = \dfrac{-1/28}{F'(m)\,F''(s)}\,\Big((s^2-m^2)\,F''(s) + 2s\,F'(m)\Big).
\end{equation}
\end{Lemma}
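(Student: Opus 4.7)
The plan is to obtain \eqref{adot} by applying the implicit function theorem to the system \eqref{MaxSys}, with $K$ viewed as a fixed constant of integration in the polynomial \eqref{FMAE}. The Jacobian of \eqref{MaxSys} with respect to $(s,m)$ equals $F''(s)\,F'(m)$ once $F'(s)=0$ is used, and this is non-zero under the hypothesis $s\ne \ub$: by the saddle-point analysis of Proposition \ref{Prop:orbits_waves} the point $s$ is a non-degenerate critical point of $F$, so $F''(s)\ne 0$, while $m$ is the second intersection of the level $h_s$ with the graph of $F$ and is not itself a critical point, so $F'(m)\ne 0$. Hence $s=s(c)$ and $m=m(c)$ are smooth functions of $c$ in a neighborhood of any such solution, and their total $c$-derivatives exist.

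First, I would differentiate $F'(s(c))=0$ with respect to $c$. From \eqref{FMAE}, keeping $K$ and $u$ fixed, the partial derivative $\partial_c F'(u)=-u/14$, so the chain rule yields
\[
  F''(s)\,\partial_c s - \frac{s}{14} = 0, \qquad \text{hence} \qquad \partial_c s = \frac{s}{14\,F''(s)}.
\]
Next, differentiating the relation $F(m(c)) - F(s(c)) = 0$ and using $\partial_c F(u) = -u^2/28$ together with $F'(s)=0$, I would obtain
\[
  F'(m)\,\partial_c m - \frac{m^2}{28} + \frac{s^2}{28} = 0, \qquad \text{hence} \qquad \partial_c m = \frac{m^2 - s^2}{28\,F'(m)}.
\]

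Finally I would compute $\partial_c a = \partial_c m - \partial_c s$ and put the two fractions over the common denominator $28\,F'(m)\,F''(s)$; rearranging the sign then reproduces exactly the expression in \eqref{adot}. The computation itself is completely routine. The only place where any care is required is the conceptual choice of what is held fixed: the claimed formula matches the interpretation in which $K$ (not $s$) is the independent parameter, so that $s$ varies with $c$ and contributes the term involving $2s\,F'(m)$ through $\partial_c s$. I expect no substantial obstacle beyond keeping track of signs.
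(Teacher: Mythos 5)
Your proposal is correct and follows essentially the same route as the paper: implicit differentiation of the two relations in \eqref{MaxSys} with respect to $c$ (holding $K$ fixed), yielding $\partial_c s = \tfrac{s}{14\,F''(s)}$ and $\partial_c m = \tfrac{m^2-s^2}{28\,F'(m)}$, and then subtracting. The extra remark justifying $F''(s)\neq 0$ and $F'(m)\neq 0$ via the non-degeneracy of the saddle is a small but welcome addition that the paper leaves implicit.
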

\noindent In the following Proposition, we study the sign of  \eqref{adot}  to  determine the regions in the parameter set $\A$ which give rise to solitary waves whose amplitude $|a|$ is increasing or decreasing with respect to the wave speed  $c$, cf.~Figure \ref{amplituderegions}.
\begin{Proposition} We distinguish between the following cases:
    \begin{itemize}
      \item For $(c,s) \in R_1$ we obtain solitary waves of \emph{elevation} whose amplitude is strictly \emph{increasing} with $c$ in region (I), and \emph{decreasing} with $c$ in regions (II) and (III).
     \item For $(c,s) \in R_4$ we obtain solitary waves of \emph{depression} whose amplitude is strictly \emph{decreasing} with $c$ in region (IV) and \emph{increasing} with $c$ in region (V).
    \end{itemize}
 \label{Prop}
\end{Proposition}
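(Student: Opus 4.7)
The plan is to apply Lemma \ref{Lem} directly and analyze the sign of the right-hand side of \eqref{adot} separately on each of $R_1$ and $R_4$. The argument splits naturally into three stages: fixing the sign of the rational prefactor, reducing the sign of the numerator to the vanishing of a polynomial in $(c,s)$ only, and reading off the sign in each connected component by a test-point argument.

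First I would determine the sign of the prefactor $-1/(28\,F'(m)\,F''(s))$. In $R_1$ we have $F''(s)<0$ by definition of the region, and the geometry of the homoclinic connection in the phase plane forces $s<u_c<m$, where $(u_c,0)$ denotes the center of system \eqref{HSys}; since $F$ is increasing to the right of its local minimum at $u_c$, this gives $F'(m)>0$. Hence $F'(m)\,F''(s)<0$ on $R_1$ and the prefactor is positive there. A symmetric analysis on $R_4$, where $s<\ub$ and $F''(s)>0$, fixes the opposite sign pattern. Consequently the sign of $\partial_c a$ on $\A$ is governed entirely by the numerator
\begin{equation*}
G(c,s,m)\;:=\;(s^2-m^2)\,F''(s)+2s\,F'(m).
\end{equation*}

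Next I would eliminate the implicitly defined quantity $m=m(c,s)$. Since $F'(s)=0$, the quintic $F(u)-F(s)$ factorises as $(u-s)^2\,Q(u;c,s)$ with $Q$ a cubic in $u$, and by construction $m$ is the real root of $Q$ selected by the homoclinic orbit. Taking the resultant with respect to $m$ of $G(c,s,m)$ and $Q(m;c,s)$ produces a polynomial in $(c,s)$ whose zero locus contains all points of $\A$ at which $\partial_c a$ can change sign. After discarding factors supported on $A_1$ or $A_2$, this leaves the algebraic curve $R(c,s)=0$ of total degree nine announced in Figure \ref{Fig:amp}. Together with $A_1$ and $A_2$, it partitions $\A$ into the five open components (I)-(V); on each of these $\partial_c a$ has constant sign, which is then identified by plugging a convenient test point into \eqref{adot}, giving the claimed monotonicity pattern (increasing in (I) and (V), decreasing in (II), (III) and (IV)).

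The principal difficulty I anticipate lies not in the qualitative argument but in the algebraic geometry of $R(c,s)=0$. One must verify that the resultant really cuts $\A$ into exactly the configuration depicted in Figure \ref{Fig:amp}; that the relevant root of $Q$, rather than a spurious root of the resultant, is responsible for the sign change inside each sub-region; and that the denominator of \eqref{adot} does not vanish anywhere in the interior of $\A$, so that no hidden sign changes are missed. These bookkeeping tasks are unavoidably computational and I would defer them to Appendix \ref{A:}.
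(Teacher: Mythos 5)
Your proposal follows essentially the same route as the paper: apply Lemma \ref{Lem}, fix the sign of the prefactor via $F'(m)>0$ and the sign of $F''(s)$ on $R_1$ and $R_4$, eliminate $m$ by a resultant against the cubic cofactor of $F(u)-F(s)$ to obtain the degree-nine curve $R(c,s)=0$, and determine the constant sign on each resulting component by test points, deferring the algebraic verification of the configuration to the appendix. The only cosmetic difference is that the paper first strips the common factor $(s-m)^2$ from both the numerator and the constraint before taking the resultant, whereas you discard the resulting spurious $F''(s)$-factors afterwards; the argument is the same.
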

\begin{figure}[h]
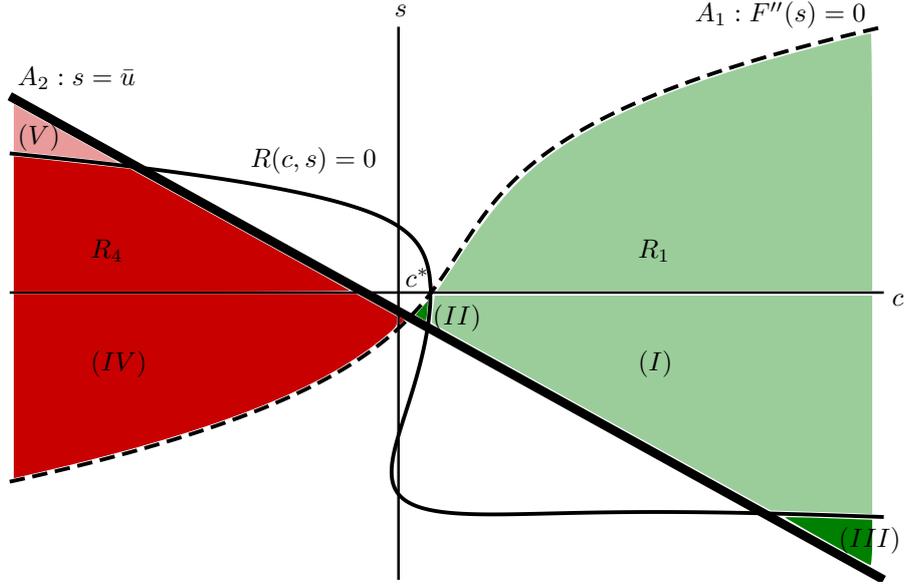

  \centering
    \begin{lpic}[l(0mm),r(0mm),t(-3mm),b(-5mm)]{regionsamp2(0.6)}
       \lbl[l]{140,50;$(I)$}
      \lbl[l]{184,11;$(III)$}
      \lbl[l]{95,60;$(II)$}
      \lbl[l]{89,69;$c^*$}
      \lbl[l]{20,50;$(IV)$}
      \lbl[l]{4,100;$(V)$}

      \lbl[m]{197,64;$c$}
      \lbl[m]{88,128;$s$}
      \lbl[l]{4,113;$A_2: s=\ub$} 
      \lbl[r]{190,127;$A_1: F''(s)=0$}

      \lbl[l]{55,95;$R(c,s)=0$}  
      
     \lbl[l]{140,75;$R_1$}
      \lbl[l]{20,75;$R_4$}
    \end{lpic}
   \caption{Choosing parameters in the lighter shaded regions  \textit{(I)} and \textit{(V)} we obtain solitary waves which \emph{increase} with the wave speed $c$, whereas solutions corresponding to parameters in the darker shaded regions \textit{(II), (III) and (IV)} \emph{decrease} with respect to $c$. %The algebraic curves $A_1$ and $A_2$ are defined in \eqref{boundsA}, and the curve $R(c,s)=0$ is defined in \eqref{resultant} below.
   }
 \label{amplituderegions}
 \end{figure}
\begin{proof}[Proof of Proposition \ref{Prop}]
Denote $a = m-s$, so that the wave amplitude is given by $|a|$, and $a<0$ for waves of depression (in $R_4$) whereas  $a>0$ for waves of elevation (in $R_1$). 
 Observe that $F'(m) >0$ for all $(c,s) \in \A$ and recall that $F''(s) <0$ in $R_1$ and $F''(s) > 0$ in $R_4$. Therefore, and in view of \eqref{adot}, it suffices to study the sign of 
 \begin{equation*}
     (s^2-m^2)\,F''(s) + 2s\,F'(m),
 \end{equation*}
which in view of \eqref{Kcs} yields 
\begin{equation}
\label{numeradotKs}
    \frac{1}{14} \, \left( s-m \right) ^{2} \underbrace{ (6\,{m}^{2}s-4\,sm+12\,m{s}^{2}-2\,{s}^{2}+6\,{s}^{3}-1+c )}_{=:Q_{c,s}(m)}.
\end{equation}
Recall that $(s,m)$ solves \eqref{MaxSys} which reads
\begin{equation}
\label{MaxSysKs}
{\frac {1}{140}}\, ( s-m ) ^{2} \underbrace{( 6\,{m}^{3}-5\,{m}^
{2}+12\,{m}^{2}s+10\,m+18\,m{s}^{2}-10\,sm+5-5\,c-15\,{s}^{2}+24\,{s}^
{3}+20\,s ) }_{=:P_{c,s}(m)}.
\end{equation}
Since we are interested in solutions $s\neq m$, we study the system 
\begin{align}
\label{AmpSys}
\left \{
\begin{array}{l l}
     Q_{c,s}(m)&=0\\
     P_{c,s}(m)&=0
 \end{array}
 \right.
\end{align}
which has a solution if and only if $Q_{c,s}(m)$ and $P_{c,s}(m)$ have a common root, i.e.~if their resultant with respect to $m$, 
\begin{align}
\label{resultant}
  R(c,s) :=&\,Res(Q_{c,s}(m),P_{c,s}(m),m)\\
            =& \,31104\,{s}^{9}-10368\,{s}^{8}+32832\,{s}^{7}+( -15552\,c+39456) \,{s}^{6}\notag\\ 
 & \, + \left( -3816-864\,c \right) {s}^{5}+ \left( 23472-
3312\,c \right) {s}^{4}+24\, \left( c-1 \right)  \left( 108\,c-593
 \right) {s}^{3}\notag\\
&\,  +24\, \left( 33\,c+107 \right)  \left( c-1 \right) {s}
^{2} -690\, \left( c-1 \right) ^{2}s+36\, \left( c-1 \right) ^{3}\notag,
\end{align}
 is zero (see Appendix \ref{A:2} for a discussion on the involved curves). Hence, system \eqref{AmpSys} has a solution, i.e.~$\partial_c \, a=0$, only along the algebraic curve $R(c,s)=0$ meaning that within the regions in $\A$ separated by this curve, the sign of  $\partial_c \, a$ is constant. Hence, it suffices to pick one pair of parameters $(\bar{c},\bar{s})$ in each of these regions and compute the values of $m$ and $Q$ to determine the sign of $\partial_c \, a$ in view of expression \eqref{adot}. For example, let $\bar{s}_1=-0.1$ and $\bar{c}_1=1.5$ in $R_1$ then, computing  the corresponding $m_1$, we find that $Q_{\bar{c}_1,\bar{s}_1}(m_1)>0$. Therefore, since $F'(m)<0$ and $F''(s) <0$ in $R_1$, we obtain that $\partial_c \, a>0$. Hence, the amplitude $|a|$ of solitary wave solutions of \eqref{MAE} arising from parameters in the region denoted by $(I)$ in Figure \ref{amplituderegions} is increasing with respect to the wave speed $c$. To provide an example for waves of depression, pick $\bar{s}_4=-0.5$ and $\bar{c}_4=15$ in $R_4$, which yields $m_4$ such that $Q_{\bar{c}_4,\bar{s}_4}(m_4)<0$.  Therefore, since $F'(m)<0$ and $F''(s) >0$ in $R_4$, we obtain that $\partial_c \, a>0$. In view of the fact that $a<0$ in  $R_4$ this means that the amplitude $|a|$ of solitary waves with parameters in  region $(IV)$ is decreasing with respect to $c$.  The results for the other regions $(II),(III)$ and $(V)$ can be obtained in exactly the same way.  
\end{proof}

\begin{proof}[Proof of Lemma \ref{Lem}]
Consider $F$ in \eqref{FMAE}, regarding it as a polynomial in $u$ and $c$, and define 
\[
  f(u,c) := F(u)=  K\,u + \frac{1-c}{28}u^2 + \frac{1}{14}u^3 - \frac{1}{28}u^4 + \frac{3}{70}u^5.
\]
Then the first equation in \eqref{MaxSys} rewrites as 
\[
 f_u(s,c)=0,
\]
 where subscripts denote partial differentiation. Implicit differentiation with respect to $c$ of the last equation yields
\[
 f_{uu}(s,c)\,\dot s + f_{uc}(s,c) = F''(s)\,\dot s -\frac{1}{14} s = 0,
\]
where $\,\dot {} $ denotes differentiation with respect to $c$, and therefore
\begin{equation*}
 \label{sdot}
  \dot s = \frac{1}{14} \frac{s}{F''(s)}.
\end{equation*}
The second equation in \eqref{MaxSys} reads
\[
  f(s,c) - f(m,c) = 0,
\]
which upon implicit differentiation yields
\[
 f_u(s,c)\,\dot s + f_c(s,c) - f_u(m,c)\,\dot m - f_c(m,c)=0
\]
so
\begin{equation*}
 \label{mdot}
  \dot m = \dfrac{f_c(s,c) - f_c(m,c)}{f_u(m,c)} = \dfrac{-1/28}{F'(m)}\,(s^2 - m^2).
\end{equation*}
Since $\dot a = \dot m - \dot s$, this proves \eqref{adot}. 
\end{proof}

\subsection{Monotonicity}
\label{subsect:monot}
We show that the profile $u$ of a solitary traveling wave solution of \eqref{MAE} is monotone from the undisturbed water level $s$ to its maximum or minimum $m$, with precisely one inflection point on either side of the wave crest or trough. To this end, consider the right hand side of \eqref{v},
\begin{equation*}
    v=\sqrt{\vphantom{2\,\dfrac{F(s) - F(u)}{u-\ub}}\smash{\underbrace{2\,\dfrac{F(s) - F(u)}{u-\ub}}_{=:D(u)}}}. \vspace{1em}
\end{equation*}
We claim that, as a  function of $u$, this expression has a unique critical point between $s$ and $m$. Since the square root is monotonous, it suffices to consider the number of critical points of the discriminant $D(u)$ of this expression. By construction, $D(u)$ has a critical point at $s$. We will show that there exists precisely one more critical point in $(s,m)$, which corresponds to the unique inflection point of the wave profile $u$ between its trough and crest. To this end, we study $D'(u)$ and prove that it has exactly one real root to the right of $s$. For simplicity, we will give the proof only in the case $u>\ub$, the other case can be proven in exactly the same way. Indeed, consider the numerator of $D'(u)$, which in view of \eqref{Kcs} yields 
\begin{equation}
\label{eq_monot_Iscu}
    (s-u)\,I_{c,s}(u),
\end{equation}
where $I_{c,s}(u)$ is a fourth-order polynomial in $u$ whose coefficients depend polynomially on the parameters $s$ and $c$. In Appendix \ref{A:3} we prove that the number of roots of $I_{c,s}(u)$ in $(s,\infty)$ remains constant if we vary the parameters $(c,s) \in \mathcal{A}$  (cf.~also Lemma 3.6 in \cite{GarGasGia13}). Therefore, it suffices to  evaluate the polynomial at any point $(\bar c,\bar s) \in \A$ and to deduce that the resulting univariate polynomial has a unique real root in $(\sb,\infty)$ employing Sturm's method (cf.\cite{Stoer1980}). We conclude that $D'(u)$ has a unique real root to the right of $s$ for parameters in $\A$, which proves the claim.

%%%%%%%%%%%%%%%%%%%%%%%%%%%%%%%%%%%%%%%%%%%%%%%%%%%%%%%%%%%%%%%%%%%%%%%%%%%%%%%%%%%%%%%%%%%%%%%%%%%%%%%%%%%%%%%%%%%%%%%%%%%%%%%%%%%%
%%%%%%%%%%%%%%%%%%%%%%%%%%%%%%%%%%%%%%%%%%%%%%%%%%%%%%%%%%%%%%%%%%
\section{Traveling waves of the Camassa-Holm equation}
\label{sect_CH}
We would like to point out that the method to prove existence of traveling waves put forward in Section \ref{sect:Existence} is applicable to a wide class of nonlinear dispersive evolution equations. As an example, we apply our approach to the Camassa-Holm equation, which is usually written in the form
\begin{equation}
 \label{CH}
  u_t + 2\kappa\,u_x -u_{txx} + 3\, u\,u_x  = 2\,u_x u_{xx} + u\,u_{xxx},
\end{equation}
for $x \in \R$, $t>0$ and $\kappa\in\R$. For traveling waves $u(x,t)=u(x-c\,t)$, equation \eqref{CH} takes the form
\begin{equation*}
 u''(u-c) + \frac{(u')^2}{2} + K + (c-2\kappa)\,u - \frac{3}{2}u^2 = 0,
\end{equation*} 
where $K$ is a constant of integration. If instead of $u$ we study the translate 
\[
  w=u-c,
\]
the previous equation reads
\begin{equation}
\label{CHODE}
 w''\,w + \frac{1}{2}(w')^2 + F'(w)= 0,
\end{equation} 
where 
\begin{equation}
 \label{FCH}
  F(w)=  A\,w - B\,w^2 - \frac{1}{2}w^3,
\end{equation}
with constants $A=K-2\kappa c - \frac{1}{2}c^2$ and $B=c+\kappa $. Now, equation \eqref{CHODE} is of the form \eqref{MAEODE} and we may prove existence of traveling wave solutions as in Section \ref{sect:Existence}:

\begin{Theorem}
 \label{Prop:CH}
 There exist solitary and periodic traveling wave solutions of the Camassa-Holm equation \eqref{CH} for every $c$, $K$ and $\kappa$ satisfying
  \begin{equation}
    \label{boundsCH}
      -\frac{2}{3}\, B^2 < A <-\frac{1}{2}\,B^2,
  \end{equation}
  with constants $A$ and $B$ defined as above. All solitary waves are symmetric with respect to their unique  maximum/minimum and tend exponentially to a constant at infinity. Periodic waves exist also for $A>-\frac{1}{2}B^2$. %For $A =-\frac{1}{2}\,B^2$ we obtain continous solitary wave solutions with a peak at the crest point. 
\end{Theorem}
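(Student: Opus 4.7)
The plan is to apply the general framework of Section \ref{sect:Existence} directly to equation \eqref{CHODE}, which is precisely of the form \eqref{MAEODE} with $\ub=0$ and $F$ given by \eqref{FCH}. The associated Hamiltonian system then reads $w'=wv$, $v'=-F'(w)-\tfrac12 v^{2}$, with Hamiltonian $H(w,v)=F(w)+\tfrac12 v^{2}w$, and bounded orbits correspond to bounded traveling waves via Proposition \ref{Prop:orbits_waves}. The task reduces to pinpointing the parameter range producing a homoclinic connection (for solitary waves) and the range producing closed orbits around a center (for periodic waves).

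The critical points of \eqref{HSys} are the roots of $F'(w)=A-2Bw-\tfrac32 w^{2}$, whose discriminant is $4B^{2}+6A$; hence $F$ has two distinct extrema precisely when $A>-\tfrac23 B^{2}$, giving the lower bound in \eqref{boundsCH}. By Vieta's formulas the product of these roots equals $-\tfrac{2A}{3}$, so both extrema lie strictly on the same side of the invariant line $w=0$ if and only if $A<0$; the Jacobian computation in the proof of Proposition \ref{Prop:orbits_waves} then identifies one of these extrema as a saddle and the other as a center. This is the direct analogue of the conditions cutting out the region $\A$ in Section \ref{subsect:CondExistence}.

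The main obstacle, and the source of the upper bound $A<-\tfrac12 B^{2}$, is to show that the level set through the saddle actually closes up in the $(w,v)$ plane, which is not automatic for a cubic $F$. Denoting the saddle coordinate by $s$ and factoring $F(s)-F(w)=\tfrac12 (w-s)^{2}(w-m)$, formula \eqref{v} becomes
\begin{equation*}
v^{2}=\frac{(w-s)^{2}(w-m)}{w},
\end{equation*}
so that a closed orbit through $(s,0)$ forces $m$ and $s$ to lie on the same side of $0$ (otherwise $v^{2}\to\infty$ as $w\to 0$ and the orbit escapes to infinity). Matching the coefficient of $w^{2}$ in the factorisation produces the clean relation $m=-2B-2s$; substituting the explicit saddle coordinate $s=(-2B\pm\sqrt{4B^{2}+6A})/3$ and discussing the two cases $B<0$ and $B>0$ (corresponding respectively to both extrema being positive or negative) reduces the inequality $ms>0$ in each case to $A<-\tfrac12 B^{2}$. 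Combined with the lower bound, this yields \eqref{boundsCH}; symmetry about the unique extremum and exponential decay to the asymptotic value then follow verbatim from the Hartman--Grobman argument in the proof of Proposition \ref{Prop:orbits_waves}.

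Finally, periodic waves come from closed level sets of $H$ around the center of \eqref{HSys}. As long as the center persists in the phase plane -- which, given two distinct extrema, happens throughout $A<0$ -- nearby level curves $\{H=h\}$ are small closed orbits by the Morse lemma, producing a family of smooth periodic traveling waves. In particular, in the range $-\tfrac12 B^{2}<A<0$ the saddle loop has collapsed yet the center survives, yielding the claimed periodic waves outside \eqref{boundsCH}.
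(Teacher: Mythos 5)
Your proposal is correct and follows essentially the same route as the paper: reduce \eqref{CHODE} to the Hamiltonian framework of Section \ref{sect:Existence}, obtain the lower bound from the discriminant $4B^2+6A>0$ of $F'$, and obtain the upper bound from the requirement that the saddle level set close up before reaching the singular line $w=0$. Your factorization $F(s)-F(w)=\tfrac12(w-s)^2(w-m)$ with $m=-2B-2s$ and the condition $ms>0$ is just a more explicit rendering of the paper's critical relation $F(s)=F(0)$ (the case $m=0$), and it has the merit of actually verifying the direction of the inequality $A<-\tfrac12 B^2$.
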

\begin{proof}
Bounded orbits in the phase plane associated to \eqref{CHODE} give rise to traveling waves of \eqref{CH} in view of Proposition \ref{Prop:orbits_waves} as before. To work out conditions for the existence of homoclinic and periodic orbits we proceed along the lines of the proof of Proposition \ref{Lem:Conditions_orbits}. Bounded orbits exist as long as the local extrema of $F$ are distinct, i.e. when the discriminant of $F'$, discrim$(F',w)= 6A + 4B^2$, is greater than zero. This yields the lower bound in \eqref{boundsCH}. To guarantee the existence of homoclinic orbits we have to ensure that there is one saddle point and one center point in the phase plane. To this end, we study the relation $F(s)=F(0)$, where $s$ is the solution of $F'(s)=0$ with $F''(s)<0$ for $s>0$ and $F''(s)>0$ for $s<0$, which yields the curve $A=-\frac{1}{2} B^2$ marking the upper bound in  \eqref{boundsCH}.
\end{proof}
\begin{Proposition}
There exist peaked continuous solitary traveling wave solutions of the Camassa-Holm equation \eqref{CH} for $c$, $K$ and $\kappa$ satisfying $A=-\frac{1}{2}\,B^2$. 
\end{Proposition}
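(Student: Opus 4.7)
The plan is to adapt the phase-plane philosophy of Sections \ref{Section:finitehomorbit} and \ref{subsect:peaked} to the Camassa--Holm setting. First I would check that the condition $A=-\tfrac12 B^{2}$ is precisely the locus on which the saddle $s$ of the planar system associated with \eqref{CHODE} sits on the energy level of the invariant line $w=0$, i.e.\ $F(s)=F(0)$. Indeed, eliminating $A$ via $F'(s)=0$ yields $F(s)=s^{2}(B+s)$, which vanishes for $s\neq 0$ precisely when $s=-B$, and substitution back into $F'(s)=0$ forces $A=-\tfrac12 B^{2}$. This parallels the role of the curve $A_{2}:s=\bar u$ that governs the compactly supported waves of Section \ref{Section:finitehomorbit}, but with a decisive difference: there the saddle coincided with the invariant line and the orbit met it tangentially ($v\to 0$), whereas here the saddle $s=-B$ is separated from $w=0$, so the orbit will reach the invariant line with a nonzero limiting value of $v$, producing a peak instead of a smooth merge.

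Concretely, at $A=-\tfrac12 B^{2}$ the polynomial \eqref{FCH} factors as $F(w)=-\tfrac12\, w\,(w+B)^{2}$. The Jacobian criterion from the proof of Proposition \ref{Prop:orbits_waves} identifies $(-B,0)$ as the saddle and $(-B/3,0)$ as the center of \eqref{HSys} (for $B>0$; the case $B<0$ is symmetric). Using \eqref{v}, the level set $H=F(s)=0$ simplifies to
\begin{equation*}
  v^{2}\;=\;-\,\frac{2\,F(w)}{w}\;=\;(w+B)^{2},
\end{equation*}
so that $v=\pm(w+B)$ are two heteroclinic branches leaving the saddle and meeting the invariant line at $v=\pm B$ respectively. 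Integrating $\dot w=\pm(w+B)$ gives the explicit orbits $w(\xi)=-B+C\,e^{\pm\xi}$, which tend exponentially to $-B$ on one end and reach $w=0$ in finite $\xi$ on the other. Gluing a left and a right branch at the crossing point $w=0$ (translated to $\xi=0$), using the invariance of \eqref{CHODE} under $\xi\mapsto -\xi$, produces the continuous, symmetric profile $w(\xi)=B\bigl(e^{-|\xi|}-1\bigr)$, or equivalently, writing $u=w+c$ with $B=c+\kappa$,
\begin{equation*}
  u(\xi)\;=\;(c+\kappa)\,e^{-|\xi|}\,-\,\kappa,
\end{equation*}
which is the classical Camassa--Holm peakon, decaying exponentially to $-\kappa$ at infinity.

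The main obstacle will be the matching at the peak, where $u$ is only Lipschitz and the one-sided derivatives $\dot u(0^{\pm})=\mp B$ disagree. The traveling-wave equation \eqref{CHODE} holds classically on $\R\setminus\{0\}$; at $\xi=0$ one has to check that the jump in $\dot w$ is compatible with the equation. This is automatic because the coefficient of $w''$ in \eqref{CHODE} is $w$ itself, which vanishes at the peak and kills any $\delta$-contribution, while the remaining terms match since $(\dot w)^{2}\bigl|_{w=0}=B^{2}=-2A=-2F'(0)$. Equivalently, the Hamiltonian identity $\tfrac12 w(\dot w)^{2}+F(w)\equiv 0$ holds continuously through the peak, so the constructed profile is a bona fide traveling wave of \eqref{CH} in the standard weak sense used for Camassa--Holm peakons.
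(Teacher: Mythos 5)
Your proof is correct and follows essentially the same route as the paper: both identify $A=-\tfrac12 B^2$ as the locus where the saddle's energy level coincides with that of the singular line, read off $v=\pm(w+B)$ from \eqref{v}, and observe that the nonzero limit $v\to\pm B$ at the crossing produces the peak, with the vanishing coefficient of $w''$ guaranteeing that the glued profile still satisfies \eqref{CHODE} there. You go somewhat further than the paper by integrating the orbit explicitly to recover the classical peakon formula $u(\xi)=(c+\kappa)e^{-|\xi|}-\kappa$ and by spelling out the weak-solution matching at the crest, which usefully confirms consistency with the known Camassa--Holm peakons.
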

\begin{proof}
For parameters satisfying $A=-\frac{1}{2}\,B^2$ we obtain homoclinic orbits which give rise to continuous solitary traveling wave solutions of \eqref{CH} with a peaked crest. Indeed, for this choice of parameters $h_s=F(s)=0$, so from relation \eqref{v} we get 
\begin{equation*}
  v= \pm (B-u) \;\text{ and }\; v' = \pm 1.
\end{equation*}
Hence, $v\rightarrow \pm B$ when $u\rightarrow 0$, so there is a discontinuity of $v$ at $(0,\pm B)$, the crestpoint of the solitary wave solution. However, it is straightforward to check that such a solution still satisfies the equation \eqref{CHODE} in this point.
\end{proof}

\begin{Remark}
 Solitary traveling waves decaying to the flat surface at zero are known to exist when $c>2\kappa$, cf.~\cite{Constantin2002}. This result is reflected in the condition $A> -\frac{2}{3}\, B^2$ for $K=0$. Moreover, we recover the fact that peaked solitons (peakons) exist when $\kappa =0$, cf.~\cite{Camassa}, from the relation $A=-\frac{1}{2}\,B^2$ for $K=0$.% which yields $(2c-\kappa) \,\kappa=0$. Note that the factor $2c=\kappa$ does not play a role because these values lie outside of the the region $c>2\kappa$.
\end{Remark}

%%%%%%%%%%%%%%%%%%%%%%%%%%%%%%%%%%%%%%%%%%%%%%%%%%%%%%%%%%%%%%%%%%
%%%%%%%%%%%%%%%%%%%%%%%%%%%%%%%%%%%%%%%%%%%%%%%%%%%%%%%%%%%%%%%%%%
\appendix
\section{Algebraic curves}
\label{A:}
We want to provide some remarks on the algebraic curves involved in our analysis and show that our figures display correctly the graphs of the corresponding expressions. We only exemplify the procedure by proving some selected cases. First,  recall a result on the number of roots of polynomials (cf.~\cite{GarGasGia13}), which we will use repeatedly:
\begin{Lemma}
\label{L_roots}
Consider an interval $\Omega\subset\R$ and a family of real polynomials whose coefficients depend continuously on a real parameter $b$,
\[
   G_b(x)= g_{n}(b)x^{n} + g_{n-1}(b)x^{n-1}+\dots+ g_1(b)x+g_0(b).
\]
 Suppose there exists an open interval $I\subset\R$ such that:
\begin{enumerate}[(i)]
 \item There is some $b_0\in I$, such that $G_{b_0}(x)$ has exactly $k$ simple roots on $\Omega$.
 \item For all $b\in I$, the discriminant of $G_b$ with respect to $x$ is different from zero.
 \item For all $b\in I$, $g_n(b)\neq 0$.
\end{enumerate}
Then for all $b\in I$, $G_b(x)$ has exactly $k$ simple roots on $\Omega$. Moreover, if  $\Omega=\Omega_b :=((c(b),\infty)\subset\R$ for some continuous function $c(b)$ the same result holds if we add the hypothesis: 
\begin{enumerate}
\item[(iv)]  For all $b\in I$, $G_b(c(b))\neq 0$.
\end{enumerate}
\end{Lemma}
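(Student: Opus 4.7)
The natural approach is a connectedness argument. Set
\[
J=\{\,b\in I : G_b \text{ has exactly $k$ simple roots on }\Omega\,\},
\]
which contains $b_0$ by hypothesis (i). Since $I$ is connected, it suffices to show that $J$ is both open and closed in $I$, which then forces $J=I$.

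The tool behind both openness and closedness is continuous dependence of simple roots on the coefficients. By (ii), no root of $G_b$ is repeated for $b\in I$, so $G_b'$ never vanishes at a root and the implicit function theorem produces continuous local branches $b\mapsto x_j(b)$ enumerating the real roots. Hypothesis (iii) keeps $\deg G_b=n$, so no root can enter or leave through infinity, and (ii) again rules out both a collision of two real roots and the merging of a complex conjugate pair into a double real root. Consequently the \emph{total} number of real roots of $G_b$ is locally constant, hence constant on the connected set $I$. Openness of $J$ is then immediate: if $b\in J$, the $k$ relevant branches $x_j(b')$ remain in the open interval $\Omega$ for $b'$ close to $b$, and constancy of the total count prevents any new root from appearing in $\Omega$.

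For closedness, I would take $b_m\to b$ in $I$ with $b_m\in J$. The $k$ root curves of $G_{b_m}$ in $\Omega$ are bounded (no escape to infinity by (iii)), so after passing to a subsequence they converge to $k$ roots of $G_b$ lying in $\overline{\Omega}$; by (ii) these $k$ limits are distinct. The only remaining task is to exclude a limit landing on the boundary $\partial\Omega$, and this is the main obstacle of the proof. In the moving-boundary setting $\Omega_b=(c(b),\infty)$, hypothesis (iv) together with the continuity of $c(\cdot)$ is tailored precisely for this purpose: since $G_b(c(b))\neq 0$ and both $x_j(\cdot)$ and $c(\cdot)$ are continuous, no root branch can meet the endpoint $c(b')$ along the way, so all $k$ limits remain in $\Omega_b$. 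In the fixed-$\Omega$ statement, the analogous non-vanishing at any finite endpoint is the implicit setting under which the lemma is applied. In either case this yields $J=I$ and completes the argument.
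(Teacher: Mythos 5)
Your argument is correct and takes essentially the same route as the paper, which does not give a formal proof but only the intuition paragraph following the lemma (continuous dependence of simple roots, with (ii), (iii) and (iv) excluding root collisions and bifurcations from infinity or from the boundary, so that the count is locally constant on the connected interval $I$) and defers to \cite{GarGasGia13}. Your open-and-closed formalization, including the remark that the fixed-$\Omega$ case implicitly needs non-vanishing at any finite endpoint, simply fills in those standard details.
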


\noindent The intuition behind this result is as follows: In view of the hypotheses $(i)-(iv)$, the roots of $G_b(x)$ depend continuously on $b$. Assumptions $(iii)$ and $(iv)$ impede possible bifurcations of roots from infinity or from the boundary of $\Omega$ when varying $b\in I$. Moreover, assumption $(ii)$ prevents the appearance of double real roots in the interior of $\Omega$. Therefore, the number of roots of $G_b(x)$ is constant in $\Omega$ when the parameter $b \in I$ varies, and hence $G_b(x)$ has $k$ simple roots for all $b\in I$ in view of assumption $(i)$.

\subsection{On the curves in  Section \ref{subsect:CondExistence}}
\label{A:1}
We have to ensure that the algebraic curve $M(c,s)=0$ in \eqref{boundADis}  has no real roots,  and that  $N(c,s)$ in \eqref{boundAdF} has a unique root for every choice of parameters $(c,s)$. Furthermore, we want to show that the curves $N(c,s)=0$ and $A_1$, $A_2$  of \eqref{boundsA} all intersect in precisely one point. We have
\begin{align*}
 M(c,s) &=243\,{c}^{2}+ \left( -900\,s-778+540\,{s}^{2}-1080\,{s}^{3} \right) c \\
           & + 823+1284\,s +480\,{s}^{2}+480\,{s}^{3}+2700\,{s}^{4}-1296\,{s}^{5}+1296
\,{s}^{6},
\end{align*}
which we regard as a polynomial in $c$ with parameter $s$. To show that it has no real roots, we check that the conditions of Lemma \ref{L_roots} are satisfied for $k=0$. Assumption $(iii)$ holds, since the coefficient of the highest order term is constant. Computing the discriminant of $M(c,s)$ with respect to $c$ yields
\begin{equation*}
    -16\, \left( 18\,{s}^{2}-6\,s+23 \right) ^{3},
\end{equation*}
for which it is straightforward to prove that it has no zeros in $\R$. To check assumption $(i)$ we choose, for example, $s=-1$ which gives 
\[
    M(c,-1)= 243\,{c}^{2}+1742\,c+4831>0.
\]
In view of Lemma \ref{L_roots}, we find that $M(c,s)$ is strictly positive for all $c,s\in\R$. Next we focus on 
\[
    N(c,s) = 3\,{c}^{3}+ \left( -42\,s+37 \right) {c}^{2}+ \left( -476\,s+588\,{s}^
{2}+3397 \right) c+6076\,{s}^{2}-8232\,{s}^{3}-8666\,s-2125.
\]
It is straightforward to show that assumptions $(ii)$ and $(iii)$ hold. Regarding $(i)$, we choose for example $s=1$ and find that 
\[
    N(c,1)=(3c-11)\,(c^2+2c+1177)
\]
which clearly has a unique root. Hence, for each $s\in\R$ there exists precisely one $c\in\R$ such that $N(c,s)=0$. To prove that the curves $A_1$, $A_2$ and $N(c,s)=0$ all intersect in precisely one point, observe that the $A_i$ are linear in $c$ and it is therefore easy to check that they intersect at a point $(c^*,s^*)$ where $s^*$ is the root of the cubic polynomial $P_*:=-2-20\,s+6\,{s}^{2}-12\,{s}^{3}$, and $c^*$ the corresponding value on the curve $A_2$. To see that $N(c,s)=0$ also intersects in that point, compute the resultant of $A_2$ and $N$ with respect to $c$ and find that the resulting polynomial is just a factor of $P_I$ which proves the claim. 

\subsection{On the curves in  Section \ref{subsect:ampl}}
\label{A:2}
In this subsection, we want to discuss the curves in \eqref{numeradotKs}, \eqref{MaxSysKs} and \eqref{resultant}. We show that the curve $R(c,s)=0$ intersects the curve $A_2$ three times whereas it intersects $A_1$ only once, which yields the different regions depicted in Figure \ref{amplituderegions}. To prove the latter result, we solve both $A_1$ and $A_2$ for $c$ and plug the resulting expressions into $R(c,s)$. We obtain univariate polynomials in $s$ for which it is straightforward to show that they have three roots (two negative and one positive) and one root (in zero), respectively.
To show that the polynomial expressions defining the involved curves yield a unique root for each $(c,s)\in\R^2$, we again employ Lemma \ref{L_roots} and check that the assumptions are satisfied. We exemplify the procedure by showing that the result is true for the curve $P_{c,s}(m)=0$, where 
\begin{align*}
    P_{c,s}(m) & = -6\,{m}^{3}+ ( 5-12\,s )\, {m}^{2}+ ( -18\,{s}^{2}+10\,s-10) \,m
                        -24\,{s}^{3} +15\,{s}^{2}-20\,s -5+5\,c.
\end{align*}
 Note that we are now dealing with a polynomial which depends on two parameters. Assumption $(iii)$ holds in view of the fact that the coefficient of the highest order term is constant, and it is straightforward to check $(i)$ choosing parameters $(\cb,\sb)$ which yield that $P_{\cb,\sb}(m)$ has precisely one root. To prove assumption $(ii)$, we compute the discriminant of $P_{c,s}(m)$ with respect to $m$ and obtain
\begin{align*}
    \text{Dis}&(P_{c,s}(m),m)  = 
        -24300\,{c}^{2}+ ( 120600\,s-75600\,{s}^{2}+73100+151200\,{s}^{3}) c\\
        & -70300-96200\,{s}^{2}-163600\,s-504000\,{s}^{4}+259200\,{s}^
{5}+2400\,{s}^{3}-259200\,{s}^{6}.
\end{align*}
To show that Discrim($P_{c,s}(m)$) is different from zero, we repeat the scheme for this polynomial in $c$ with parameter $s$ and ensure again that the assumptions of  Lemma \ref{L_roots} hold with $k=0$.  

\subsection{On the curves in  Section \ref{subsect:monot}}
\label{A:3}
The goal of this subsection is to prove that for the polynomial 
\begin{align*}
    I_{c,s}(u)  = & -168\,{u}^{4}+ \left( 90-15\,c-168\,s \right) {u}^{3} 
                  + \left( 10\,c-130-168\,{s}^{2}+90\,s-15\,sc \right) {u}^{2}\\
                &+(-50+10\,sc+20\,c -130\,s+90\,{s}^{2}-168\,{s}^{3}-15\,{s}^{2}c)u\\
                & -5-50\,s+90\,{s}^{3}+5\,{c}^{2}+20\,sc-130\,{s}^{2}-168\,{s}^{4}
                    +10\,{s}^{2}c-15\,{s}^{3}c,
\end{align*}
obtained in  \eqref{eq_monot_Iscu}, the number of roots do not change if we vary the parameters $(c,s)$ in the admissible region $\A$. We will again employ Lemma \ref{L_roots} above for $I=\A$ and $\Omega_{c,s}=(s,\infty)$. Indeed, assumption $(iii)$ holds since the highest coefficient of $I_{c,s}(u)$ is constant. To check that $(iv)$ is satisfied, we evaluate the polynomial at $u=s$ and find that
\begin{equation*}
  I_{c,s}(s)= F''(s)\cdot(s-\ub).
\end{equation*}
These factors are exactly the relations which bound the admissible parameter region  $\mathcal{A}$, and hence they do not vanish in the interior of $\A$. Note, however, that solitary waves with compact support arise from a choice of parameters $(c,s)$ on the curve $A_2=\{s-\ub=0\}$, so we need a separate argument in that case which will be carried out below. Next we study the discriminant of $I_{c,s}(u)$ with respect to $u$, 
\begin{equation}
\label{eq_monot_Discrim}
 \text{ Dis}(I_{c,s}(u),u)= D_1(c,s)\cdot D_2(c,s),
\end{equation}
and claim that the algebraic curves corresponding to the zeros of these factors lie outside of $\A$. To see this, observe that the curves $A_1$, $A_2$, $\{D_1(c,s)=0\}$ and $\{D_2(c,s)=0\}$ intersect precisely once in the point $(c^*,s^*)$. Then, we choose some $c_1 < c^*$ and find that $D_1(c_1,s)>A_1(c_1,s)>A_2(c_1,s)>D_2(c_1,s)$, whereas for any $c_2> c^*$ we obtain the reverse order. Hence, the discriminant of $I_{c,s}(u)$ does not vanish in $\A$, which proves the claim. Therefore, the assumptions of Lemma \ref{L_roots} hold, and we find that the number of roots of  $I_{c,s}(u)$ is constant in the interior of $\A$.\\

\noindent We now provide a separate but similar argument which asserts that this result holds  also for parameters on the curve $A_2$. Indeed, for $(c,s)$ on $A_2$, i.e.~when $s=\ub$, we find that 
\begin{equation*}
    D'(u)=\tilde I_{c}(u), 
\end{equation*}
where $\tilde I_{c}(u)$ is a cubic polynomial in $u$ whose coefficients depend polynomially on $c$. Along the lines of the above proof we argue that $\tilde I_{c}(u)$ has a unique real root in $(\ub,\infty)$. Indeed, no bifurcations of roots occur at infinity, and  evaluating $\tilde I_{c}(u)$ at the boundary $u=\ub$ yields a cubic polynomial in $c$ which vanishes only in $c^*\notin \A$. Using Sturm's method (cf.\cite{Stoer1980}) we show that the discriminant of $\tilde I_{c}(u)$ with respect to $u$ does not vanish. Hence, the number of roots is constant, and choosing any $\bar{c}$ we find that $\tilde I_{\bar{c}}(u)$ has a unique real root in $(\ub_{\bar{c}},\infty)$.\\

\noindent We conclude with a discussion of the polynomials in \eqref{eq_monot_Discrim},
\begin{align*}
    D_1 =& -32928\,{s}^{3}+ ( 1764\,c+22344) {s}^{2}
              - ( 84\,{c}^{2}+1148\,c+28504) s
              +3\,{c}^{3}+44\,{c}^{2}+7919\,c-5842
\end{align*}
and
\begin{align*}
    D_2 = & \, 30375\,{c}^{5} 
               +( 67500\,{s}^{2}-135000\,{s}^{3}+93100+567900\,s) {c}^{4} \\
             &+ ( 2083200\,{s}^{2}-3518100\,{s}^{4}+408900\,s+162000\,{s}^{6}         
               +1280400\,{s}^{3}-162000\,{s}^{5}+880703) {c}^{3}\\
            & + ( -368064\,s-4730400\,{s}^{6}-8347536\,{s}^{2}+5443200\,{s}^{7}
               +6777000\,{s}^{4}-11014128\,{s}^{3}\\
            & -25691040\,{s}^{5} -3605574) {c}^{2} 
               + ( -44997120\,{s}^{7}+4400084-15110352\,s+23678784\,{s}^{5}\\
            & -9163584\,{s}^{6}+60963840\,{s}^{8}-22971024\,{s}^{2}
               -17525952\,{s}^{3}-58261680\,{s}^{4}) c+227598336\,{s}^{9}\\
            & -138184704\,{s}^{8}+31667136\,s+301625856\,{s}^{7}+71568192\,{s}^{3}
               +152350848\,{s}^{6}+1062232\\
            & +54393984\,{s}^{5}+49720800\,{s}^{2}+187454304\,{s}^{4}.
\end{align*}
We will only discuss the latter curve and employ Lemma \ref{L_roots} again  for $I \times \Omega=\A$. Note that assumption $(iii)$ holds in view of the fact that the coefficient of the highest order term is constant. Computing the discriminant with respect to $c$ yields 
\begin{align*}
   \text{Dis}(D_2,c) = & \,\alpha_0\,
    ( 11664000\,{s}^{12}-23328000\,{s}^{11}+367804800\,{s}^{10}-487728000\,{s}^{9}\\
          &  +3390049800\,{s}^{8} -2253805200 \,{s}^{7}+4960871884\,{s}^{6}
              +2160459976\,{s}^{5}\\
         &+1280057526\,{s}^{4}
         + 4059678628\,{s}^{3} +1729573411\,{s}^{2}+1328288220\,s+695918709)  \\
         & \times ( 84672\,{s}^{4}+22512\,{s}^{3}+76402\,{s}^{2}+58822\,s+16767) ^{3}\\
        & \times (\underbrace{16767+29606\,s-12083\,{s}^{2}-4040\,{s}^{3}+20160\,{s}^{4}}
                              _{=:\tilde D(s)})^{2},
\end{align*}
where $\alpha_0>0$ is a real constant. It is straightforward to see that the first two factors of the above expression have no real roots, whereas the last factor $\tilde D(s)$ vanishes for two values $s_1$ and $s_2$, meaning that $D_2(c,s_i)$ may have multiple roots in that case. To ensure that assumption $(ii)$ holds, we have to prove that these values do not lie in $\A$. To this end, we use Sturm's method to derive rational upper and lower bounds for $s_i$ such that $s_1\in[\underline{s}_1,\bar{s}_1]$ and  $s_2\in[\underline{s}_2,\bar{s}_2]$, to bound the curves $A_i$ from above and below by rational constants. For example, we find that $s_1 \in [-\frac{131}{128},-\frac{65}{64}]=:I_1$. We claim that all values of $D_2$ in the strip defined by $I_1$ lie outside of $\A$. To this end we compute a bound for $A_2$,
\[
        M_1:=\min_{s\in I_1}{\{c \in \R : A(c,s)=0\}}= \frac{423}{32}\in \mathbb{Q},
\]
and construct a rational univariate polynomial 
\begin{align*}
    \tilde D_2(c) = & \,30375\,{c}^{5}-{\frac {8897979325}{32768}}\,{c}^{4}-{\frac {
602652229378808443}{274877906944}}\,{c}^{3}+{\frac {
3196448247290763459}{137438953472}}\,{c}^{2}\\
&+{\frac {
2641868472255829530863}{70368744177664}}\,c-{\frac {
17460388511693021202337}{35184372088832}},
\end{align*}
using the upper and lower bounds of the interval $I_1$ such that $D_2(c,s) < \tilde D_2(c)$. Then, it is fairly straightforward to see that $D_2(c,s) -M_1 < \tilde D_2(c) -M_1<0$, which proves the claim. Repeating this procedure with the root $s_2$ and the other bounding curves $A_i$ shows that in the region $\A$ all involved curves are displayed correctly.

 %%%%%%%%%%%%%%%%%%%%%%%%%%%%%%%%%%%%%%%%%%%%%%%%%%%%%%%%%%%%%%%%%%
%%%%%%%%%%%%%%%%%%%%%%%%%%%%%%%%%%%%%%%%%%%%%%%%%%%%%%%%%%%%%%%%%%
%%%%%%%%%%%%%%%%%%%%%%%%%%%%%%%%%%%%%%%%%%%%%%%%%%%%%%%%%%%%%%%%%%
\subsection*{Acknowledgements}
The first author is partially supported by a MCYT- FEDER grant number MTM2008-03437 and by a CIRIT grant number 2009SGR 410. The second author is  supported by the FWF project J3452 ''Dynamical Systems Methods in Hydrodynamics`` of the Austrian Science Fund.

\end{document}